\newtheorem{thm}{Theorem}[section]
\newtheorem{lem}[thm]{Lemma}
\newtheorem{cor}[thm]{Corollary}
\newtheorem{conj}[thm]{Conjecture}
\numberwithin{equation}{section}
\begin{document}

\title{A Class of Permutation Trinomials over Finite Fields}

\author[Xiang-dong Hou]{Xiang-dong Hou*}
\address{Department of Mathematics and Statistics,
University of South Florida, Tampa, FL 33620}
\email{xhou@usf.edu}
\thanks{* Research partially supported by NSA Grant H98230-12-1-0245.}

\keywords{discriminant, finite field, permutation polynomial}

\subjclass[2000]{11T06, 11T55}

\begin{abstract}
Let $q>2$ be a prime power and $f=-{\tt x}+t{\tt x}^q+{\tt x}^{2q-1}$, where $t\in\Bbb F_q^*$. We prove that $f$ is a permutation polynomial of $\Bbb F_{q^2}$ if and only if one of the following occurs: (i) $q$ is even and $\text{Tr}_{q/2}(\frac 1t)=0$; (ii) $q\equiv 1\pmod 8$ and $t^2=-2$. 
\end{abstract}

\maketitle

\section{Introduction}

Let $\Bbb F_q$ denote the finite field with $q$ elements. A polynomial $f\in\Bbb F_q[{\tt x}]$ is called a {\em permutation polynomial} (PP) of $\Bbb F_q$ if the mapping $x\mapsto f(x)$ is a permutation of $\Bbb F_q$. There is always great interest in permutation polynomials that appear in simple algebraic forms. To this end, a considerable amount of research has been devoted to finding and understanding {\em permutation binomials}. For a few samples from a long list of publications on permutation binomials over finite fields, see 
\cite{Akb-Wan06, Car62, Hou, Kim-Lee95, MPW06, Mas-Zie09, Tur88,Vas-Ryb11,Wan87, Wan94}. As for {\em permutation trinomials}, there are not many theoretic results. Discoveries of infinite classes of permutation trinomials are less frequent than those of permutation binomials \cite{Bri,Gra,Lee-Par97}.

The main results of the present paper are the following theorems.

\begin{thm}\label{T1.1}
Let $q$ be odd and $f=-{\tt x}+t{\tt x}^q+{\tt x}^{2q-1}\in\Bbb F_q[{\tt x}]$, where $t\in\Bbb F_q^*$. Then $f$ is a PP of $\Bbb F_{q^2}$ if and only if $q\equiv 1\pmod 8$ and $t^2=-2$.
\end{thm}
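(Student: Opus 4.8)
\emph{Proof proposal.} The idea is to transfer the permutation property to the subgroup $\mu_{q+1}=\{x\in\Bbb F_{q^2}^*:x^{q+1}=1\}$ and then reduce it to a statement about the quadratic character of an explicit quadratic polynomial.

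Write $h={\tt x}^2+t{\tt x}-1$, so that $f={\tt x}\,h({\tt x}^{q-1})$ on $\Bbb F_{q^2}^*$. By a standard reduction for polynomials of this shape, $f$ is a PP of $\Bbb F_{q^2}$ if and only if $h$ has no root in $\mu_{q+1}$ and $g:={\tt x}\,h({\tt x})^{q-1}$ permutes $\mu_{q+1}$; the first requirement is automatic, because the two roots of $h$ have product $-1$ and so cannot have norm $1$. Since $x^q=x^{-1}$ on $\mu_{q+1}$, one rewrites $g$ as the rational self-map
\[
g(x)=-\,\frac{x^2-tx-1}{x\,(x^2+tx-1)}\qquad(x\in\mu_{q+1}),
\]
and a direct computation shows that, for $x,y\in\mu_{q+1}$, $g(x)=g(y)$ holds exactly when $x=y$ or $Q(x,y)=0$, where
\[
Q({\tt x},y)=(1+ty-y^2){\tt x}^2+t\,(1+ty+y^2){\tt x}+(ty+y^2-1)
\]
is a quadratic in ${\tt x}$ whose leading coefficient is $-h(-y)$ and constant term is $h(y)$ (both nonzero for $y\in\mu_{q+1}$). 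Its discriminant equals $y^2P(\tau)$, where $\tau=y+y^{-1}\in\Bbb F_q$ and
\[
P(\tau)=(t^2+4)\tau^2+2t^3\tau+(t^4-4t^2-16),\qquad \operatorname{disc}P=2^7(t^2+2).
\]

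The crux — and the step I expect to require the most care — is a local analysis at each $y$. From $h(y)^q=-h(-y)/y^2$ and $h(-y)^q=-h(y)/y^2$ one sees that the Frobenius twist $Q(\cdot,y)^{(q)}$ is $y^{-2}{\tt x}^2Q({\tt x}^{-1},y)$, so $x\mapsto x^q$ sends the zero set of $Q(\cdot,y)$ onto the set of reciprocals of its zeros. Hence the two zeros of $Q(\cdot,y)$ either both lie in $\mu_{q+1}$ or are each exchanged with the reciprocal of the other; following a square root of the discriminant through Frobenius (its behaviour is governed by the quadratic character $\chi$ of $P(\tau)$ in $\Bbb F_q$) shows that the first alternative occurs precisely when $P(\tau)$ is a nonsquare, while if $P(\tau)=0$ the double zero still lies in $\mu_{q+1}$ but may or may not equal $y$. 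It follows that $g$ permutes $\mu_{q+1}$ if and only if, for every $\tau\in T:=\{\tau\in\Bbb F_q:\tau^2-4=0\text{ or }\tau^2-4\text{ is a nonsquare}\}$ (the image of $\mu_{q+1}$ under $y\mapsto y+y^{-1}$), $P(\tau)$ is a square in $\Bbb F_q$, or else $P(\tau)=0$ and the corresponding double zero of $Q(\cdot,y)$ is $y$.

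It remains to settle this condition. If $t^2=-2$ then $P(\tau)=2(\tau-t)^2$; this is a square or $0$ exactly when $2$ is a square in $\Bbb F_q$, the auxiliary condition at $\tau=t$ (where $P$ vanishes) holds automatically since then $\operatorname{disc}P=0$, and — using that $t\in\Bbb F_q$ already forces $-2$ to be a square — the whole condition amounts to $q\equiv1\pmod8$; so that case gives permutations. If $t^2\neq-2$, then $P$ has two distinct roots, and $P(\pm2)=t^3(t\pm4)$; the cases where $t(t-4)$ or $t(t+4)$ is a nonsquare are immediate, and in the remaining cases I would compare the size of $\{\tau:\chi(P(\tau))=-1\}$ with $|T|$ and apply Weil's estimate $\bigl|\sum_{\tau}\chi\bigl((\tau^2-4)P(\tau)\bigr)\bigr|\le 3\sqrt q$ — valid since the quartic $(\tau^2-4)P(\tau)$ has four distinct roots when $t\neq0$ — to exhibit some $\tau\in T$ with $P(\tau)$ a nonsquare once $q$ exceeds a small bound; the finitely many remaining $q$, together with the degenerate values $t=\pm4$, are then checked by direct computation. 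This shows $g$ is not a permutation of $\mu_{q+1}$ when $t^2\neq-2$, completing the proof.
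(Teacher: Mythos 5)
Your proposal is correct in outline but takes a genuinely different route from the paper. You invoke the reduction (essentially \cite[Lemma~2.1]{Zie09}, which the paper's Remark mentions but deliberately avoids) to the question of whether $g({\tt x})={\tt x}h({\tt x})^{q-1}$ permutes $\mu_{q+1}$, rewrite $g$ as an explicit rational self-map of $\mu_{q+1}$, and analyze collisions through the quadratic $Q({\tt x},y)$. I checked the computational backbone: the formula for $Q$, the identity $\Delta=y^2P(\tau)$ with $\tau=y+y^{-1}$, $\operatorname{disc}P=2^7(t^2+2)$, $P(\tau)=2(\tau-t)^2$ when $t^2=-2$, $P(\pm2)=t^3(t\pm4)$, and the Frobenius/character analysis are all right: writing $\delta=y-y^{-1}$, the roots of $Q(\cdot,y)$ are $\bigl(-t(\tau+t)\pm\sqrt{P(\tau)}\bigr)/\bigl(2(t-\delta)\bigr)$, and a two-line computation of the $(q+1)$st power shows that a nonsquare $P(\tau)$ puts both roots in $\mu_{q+1}$, a nonzero square $P(\tau)$ puts neither there, and $P(\tau)=0$ gives a double root in $\mu_{q+1}$; this is exactly the dichotomy you need. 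The paper instead proves sufficiency by showing a certain cubic in $\Bbb F_q[{\tt u}]$ is always reducible (its discriminant is $-16(s+5t)^2/(s^2-4)^3$, a nonsquare), and necessity by evaluating $\sum_x f(x)^s$ for $s=(q-1)q$ and $s=1+(q-2)q$ through binomial-coefficient identities, which yields the exact conditions uniformly in $q$. Your route buys a more transparent geometric picture and a shorter sufficiency argument; its cost is in the necessity direction, where the Weil estimate only bites for $q$ above an explicit bound, so finitely many small $q$, the degenerate values $t=\pm4$, and the case $t^2=-4$ (where $t^2+4=0$, $P$ is linear, and your ``quartic with four distinct roots'' claim fails as stated, though Weil for the resulting cubic still works) must be finished by direct computation, which the proposal defers rather than carries out.

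Two justifications need repair, although the underlying claims are true. First, ``the roots of $h$ have product $-1$'' excludes a root in $\mu_{q+1}$ only when the roots are conjugate over $\Bbb F_q$ (norm $-1\neq1$); when $h$ splits over $\Bbb F_q$ a root in $\mu_{q+1}$ would have to be $\pm1$, and you must add $h(\pm1)=\pm t\neq0$. Second, at $\tau=t$ in the case $t^2=-2$, ``holds automatically since $\operatorname{disc}P=0$'' is not a reason; what is needed, and what is in fact true, is that the double root $x_0=-t(\tau+t)/\bigl(2(t-\delta)\bigr)$ equals $y=(\tau+\delta)/2$, which for $\tau=t$, $t^2=-2$ follows from $(t-\delta)(t+\delta)=t^2-(\tau^2-4)=4$ (and the same computation works for the other point $y^{-1}$ above $\tau$). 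With these points fixed and the finite verification actually performed, your argument is a valid alternative proof of the theorem.
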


\begin{thm}\label{T1.2}
Let $q>2$ be even and $f={\tt x}+t{\tt x}^q+{\tt x}^{2q-1}\in\Bbb F_q[{\tt x}]$, where $t\in\Bbb F_q^*$. Then $f$ is a PP of $\Bbb F_{q^2}$ if and only if $\text{\rm Tr}_{q/2}(\frac 1t)=0$.
\end{thm}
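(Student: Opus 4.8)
The plan is to reduce the question to the behaviour of an associated rational map on the group $\mu_{q+1}$ of $(q+1)$-st roots of unity in $\Bbb F_{q^2}^*$. Write $f={\tt x}\,h({\tt x}^{q-1})$ with $h=h({\tt x})={\tt x}^2+t{\tt x}+1\in\Bbb F_q[{\tt x}]$. Since $q-1=(q^2-1)/(q+1)$, the image of ${\tt x}\mapsto {\tt x}^{q-1}$ on $\Bbb F_{q^2}^*$ is exactly $\mu_{q+1}$. A well-known criterion for polynomials of the shape ${\tt x}^r h({\tt x}^{(q^2-1)/d})$ then tells us that, because $\gcd(1,q-1)=1$, the polynomial $f$ is a PP of $\Bbb F_{q^2}$ if and only if $g({\tt x}):={\tt x}\,h({\tt x})^{q-1}$ permutes $\mu_{q+1}$ (the collision $f(0)=0=f(\beta)$ whenever $h(\beta^{q-1})=0$ is what makes the ``$g$ lands outside $\mu_{q+1}$'' case count as a non-permutation).

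The next step is to evaluate $g$ on $\mu_{q+1}$. For $x\in\mu_{q+1}$ one has $x^q=x^{-1}$, and since $h\in\Bbb F_q[{\tt x}]$,
\[
h(x)^q=h(x^q)=h(x^{-1})=x^{-2}+tx^{-1}+1=x^{-2}h(x).
\]
Hence, whenever $h(x)\ne 0$, we get $h(x)^{q-1}=x^{-2}$ and therefore $g(x)=x\cdot x^{-2}=x^{-1}$. So on the part of $\mu_{q+1}$ where $h$ does not vanish, $g$ coincides with inversion, which is a bijection of $\mu_{q+1}$; everything now hinges on whether $h$ has a zero in $\mu_{q+1}$.

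The remaining step is to decide exactly when $h$ vanishes on $\mu_{q+1}$. Its two roots $\alpha,\beta\in\Bbb F_{q^2}$ satisfy $\alpha\beta=1$. If $h$ is reducible over $\Bbb F_q$, then $\alpha,\beta\in\Bbb F_q^*$; since $q$ is even, $\gcd(q-1,q+1)=1$, so $\Bbb F_q^*\cap\mu_{q+1}=\{1\}$, and $h(1)=t\ne 0$, so $h$ has no root in $\mu_{q+1}$, $g$ is inversion on all of $\mu_{q+1}$, and $f$ is a PP. If $h$ is irreducible over $\Bbb F_q$, then $\beta=\alpha^q$, so $\alpha^{q+1}=\alpha\beta=1$, i.e. $\alpha\in\mu_{q+1}$, and $g(\alpha)=\alpha\cdot 0^{q-1}=0\notin\mu_{q+1}$, so $g$ does not permute $\mu_{q+1}$ and $f$ is not a PP. Finally, the substitution ${\tt x}=t{\tt y}$ carries $h$ to $t^2({\tt y}^2+{\tt y}+t^{-2})$, which is irreducible over $\Bbb F_q$ precisely when $\text{\rm Tr}_{q/2}(t^{-2})=1$; and $\text{\rm Tr}_{q/2}(t^{-2})=\text{\rm Tr}_{q/2}(t^{-1})$ since squaring permutes the summands of the absolute trace. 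Assembling the cases gives: $f$ is a PP of $\Bbb F_{q^2}$ if and only if $\text{\rm Tr}_{q/2}(1/t)=0$.

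As for difficulty, there is no single heavy computation: the whole argument rests on the identity $h(x)^q=x^{-2}h(x)$ on $\mu_{q+1}$ and on a quadratic-irreducibility test. The only points that genuinely require care are the correct invocation of the reduction criterion (so that ``$h$ has a root in $\mu_{q+1}$'' is recognized as a failure of permutation) and the small verification, using $\gcd(q-1,q+1)=1$ and $h(1)\ne 0$, that $h$ cannot vanish on $\mu_{q+1}$ in the split case; note that for odd $q$ this last point breaks because $-1\in\Bbb F_q^*\cap\mu_{q+1}$, which is consistent with the different conclusion in Theorem~\ref{T1.1}.
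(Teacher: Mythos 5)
Your proof is correct, and it takes a genuinely different route from the paper's. You invoke the standard criterion for polynomials of the form ${\tt x}^r h({\tt x}^{(q^2-1)/d})$ (the lemma of Zieve cited in the paper's introductory Remark, which the paper deliberately does not use) to reduce the question to whether $g({\tt x})={\tt x}h({\tt x})^{q-1}$ permutes $\mu_{q+1}$; the identity $h(x^{-1})=x^{-2}h(x)$ then shows that $g$ acts as inversion wherever $h\ne 0$, so everything collapses to whether $h={\tt x}^2+t{\tt x}+1$ has a root in $\mu_{q+1}$, which --- since $\gcd(q-1,q+1)=1$ for even $q$ and $h(1)=t\ne 0$ --- happens exactly when $h$ is irreducible, i.e.\ when $\text{Tr}_{q/2}(1/t)=1$. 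All the small verifications (the self-reciprocity of $h$, the norm-$1$ argument in the irreducible case, the Artin--Schreier irreducibility test and $\text{Tr}_{q/2}(t^{-2})=\text{Tr}_{q/2}(t^{-1})$) check out. The paper instead argues directly on the equation $f(x)=y$: for $y\ne 0$ the substitution $\tau=x^{-q}y\in\Bbb F_q$ turns $f(x)=y$ into \eqref{2.2}, which has at most one solution $\tau$, hence at most one $x$; the case $y=0$ and the failure of surjectivity when $\text{Tr}_{q/2}(1/t)=1$ are both governed by the irreducibility of ${\tt x}^2+t{\tt x}+1$, i.e.\ the same trace condition. Your argument is shorter and more structural (when $f$ is a PP it is exhibited, on $\mu_{q+1}$, as inversion), at the cost of importing the external criterion; the paper's argument is self-contained, and its substitution $\tau=x^{-q}y$ is precisely the device reused for the much harder odd-$q$ case of Theorem~\ref{T1.1}, so the paper's proof of Theorem~\ref{T1.2} also serves as a warm-up for that unified method.
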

  
The polynomials in Theorems~\ref{T1.1} and \ref{T1.2} arose from a recent study of certain permutation polynomials over finite fields defined by a functional equation \cite{FHL}. The same study also led to the discovery of a class of permutation binomials over $\Bbb F_{q^2}$ similar to the ones in the above theorems \cite{Hou}. However, the method of \cite{Hou} does not seem to work with the current situation, and we will discuss the reason in Section 4. The method of the present paper is different from that of \cite{Hou}.

The proof of Theorem~\ref{T1.2} is quite easy and will be given in Section 2. 
The proof of Theorem~\ref{T1.1} is rather involved; here we briefly describe the strategy and method for the proof. For the sufficiency part, we try to show that for every $y\in\Bbb F_{q^2}$, the equation $f(x)=y$ has a solution $x\in\Bbb F_{q^2}$. This is quite obvious when $y\in\Bbb F_q$. When $y\in\Bbb F_{q^2}\setminus\Bbb F_q$, the problem is reduced to proving that a certain cubic polynomial over $\Bbb F_q$ is always reducible. For the necessity part, we use the criterion that $\sum_{x\in\Bbb F_{q^2}}f(x)^s=0$ for $1\le s\le q-2$. The sum $\sum_{x\in\Bbb F_{q^2}}f(x)^s$ can be expressed as a double sum in terms of binomial coefficients, and can be made explicit for $s=(q-1)q$ and $1+(q-2)q$. When $s=(q-1)q$, the equation $\sum_{x\in\Bbb F_{q^2}}f(x)^s=0$ implies that $1+4t^{-2}$ is a square in $\Bbb F_q^*$; when $s=1+(q-2)q$, the equation implies that $t^2=-2$.

The proof of Theorem~\ref{T1.1} spans over three sections: Section 3: proof of the sufficiency; Section 4: computation of $\sum_{x\in\Bbb F_{q^2}}f(x)^s$;  Section 5: proof the necessity. 
The polynomial $f$ in Theorems~\ref{T1.1} and \ref{T1.2} arose from a recent work \cite{FHL} on the permutation properties of a class of polynomials defined by a functional equation. We discuss this connection in Section 6. As mentioned above, the power sum $\sum_{x\in\Bbb F_{q^2}}f(x)^s$ equals a double sum involving binomial coefficients. In Section 7, we examine some curious behaviors of that double sum and conclude the paper with a conjecture.  

\medskip
\noindent{\bf Remark.}
The polynomial $f$ in Theorems~\ref{T1.1} and \ref{T1.2} can be written as $f={\tt x}h({\tt x}^{q-1})$, where $h({\tt x})=-1+t{\tt x}+{\tt x}^2$. By \cite[Lemma~2.1]{Zie09}, $f$ is a PP of $\Bbb F_{q^2}$ if and only if ${\tt x}h({\tt x})^{q-1}$ permutes the $(q-1)$st powers in $\Bbb F_{q^2}^*$. However, as described above, our approach does not rely on this observation.

\section{Proof of Theorem~\ref{T1.2}}

Recall that in Theorem~\ref{T1.2}, $q>2$ is even and $t\in\Bbb F_q^*$.

\begin{proof}[Proof of Theorem~\ref{T1.2}]
($\Leftarrow$) Let $y\in \Bbb F_{q^2}$ be arbitrary. We show that the equation
\begin{equation}\label{2.1}
x+tx^q+x^{2q-1}=y
\end{equation}
has at most one solution $x\in \Bbb F_{q^2}$. Assume that $x\in\Bbb F_{q^2}$ is a solution of \eqref{2.1}.

{\bf Case 1.} Assume $y\ne 0$. Then $x\ne 0$. Put $\tau=x^{-q}y=x^{q-1}+x^{1-q}+t\in\Bbb F_q$. Then $x=(\frac y\tau)^q$ and \eqref{2.1} becomes
\[
\Bigl(\frac y\tau\Bigr)^q+t\,\frac y\tau+\Bigl(\frac y\tau \Bigr)^{2-q}=y,
\]
i.e.,
\begin{equation}\label{2.2}
\frac 1\tau(y^{q-1}+y^{1-q}+t)=1.
\end{equation}
Thus $\tau$ is unique, hence so is $x$.

{\bf Case 2.} Assume $y=0$. We show that \eqref{2.1} has no solution $x\in\Bbb F_{q^2}^*$. If, to the contrary, \eqref{2.1} has a solution $x\in\Bbb F_{q^2}^*$, then
\begin{equation}\label{2.3}
x^{q-1}+x^{1-q}+t=0.
\end{equation}
Since $t\ne 0$, we have $x^{2(1-q)}\ne 1$, i.e., $x^{(q-1)^2}\ne 1$. Thus $x^{q-1}\notin\Bbb F_q$. By \eqref{2.3}, $x^{q-1}$ is a root of ${\tt x}^2+t{\tt x}+1\in\Bbb F_q[{\tt x}]$. Then ${\tt x}^2+t{\tt x}+1$ must be irreducible over $\Bbb F_q$. Hence $\text{Tr}_{q/2}(\frac 1t)=1$, which is a contradiction.

($\Rightarrow$) Assume to the contrary that $\text{Tr}_{q/2}(\frac 1t)=1$. Then ${\tt x}^2+t{\tt x}+1$ is irreducible over $\Bbb F_q$. Let $x\in\Bbb F_{q^2}$ be a root of this polynomial. Then
$x^{1+q}=\text{N}_{q^2/q}(x)=1$, hence $x=y^{q-1}$ for some $y\in\Bbb F_{q^2}$. Thus we have $y^{q-1}+y^{1-q}+t=0$. Then \eqref{2.2} does not have any solution for $\tau$. Following the argument in Case 1 of ($\Leftarrow$), we see that $f(x)=y$ has no solution $x\in\Bbb F_{q^2}$, which is a contradiction.
\end{proof}

\section{Proof of Theorem~\ref{T1.1}, Sufficiency}
  
Recall that the discriminant of a cubic polynomial $g={\tt x}^3+b{\tt x}^2+c{\tt x}+d$ over a field is given by
\[
D(g)=-4c^3-27d^2+b^2c^2-4b^3d+18bcd.
\]
We first prove a lemma which appeared as an exercise in \cite[p.53]{Kap}. 

\begin{lem}\label{L3.1}
If $g\in\Bbb F_q[{\tt x}]$ is an irreducible cubic polynomial, then $D(g)$ is a square in $\Bbb F_q^*$.
\end{lem}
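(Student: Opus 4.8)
To show that an irreducible cubic $g\in\Bbb F_q[{\tt x}]$ has square discriminant, I would work with the splitting field. Since $g$ is irreducible of degree $3$, it splits completely in $\Bbb F_{q^3}$; let $\alpha_1,\alpha_2,\alpha_3\in\Bbb F_{q^3}$ be its roots, so that $\{\alpha_1,\alpha_2,\alpha_3\}$ is a single Frobenius orbit, permuted cyclically by the map $\phi:x\mapsto x^q$. The key classical identity is that $D(g)=\delta^2$, where $\delta=(\alpha_1-\alpha_2)(\alpha_1-\alpha_3)(\alpha_2-\alpha_3)$ is the Vandermonde-type product; this holds over any field and can be taken as known (or recalled in a line). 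So $D(g)$ is automatically a square in $\Bbb F_{q^3}$; the real content is that $\delta$ already lies in $\Bbb F_q$, not merely in $\Bbb F_{q^3}$.

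First I would compute $\phi(\delta)$. Applying Frobenius permutes the roots by a $3$-cycle (say $\alpha_1\mapsto\alpha_2\mapsto\alpha_3\mapsto\alpha_1$), and a $3$-cycle is an even permutation, so it fixes the alternating product $\delta$: thus $\delta^q=\delta$, i.e. $\delta\in\Bbb F_q$. Since $g$ is separable (irreducible over a perfect field), the $\alpha_i$ are distinct, so $\delta\ne 0$, hence $D(g)=\delta^2\in\Bbb F_q^{*2}$, which is exactly the claim. I would also note $D(g)\ne 0$ as a sanity check, consistent with separability.

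The only mild subtlety — and the step I'd expect to need the most care in exposition — is justifying that Frobenius acts on the roots as a $3$-cycle rather than, say, fixing one of them. This is where irreducibility (not just separability) is used: if $\phi$ fixed some $\alpha_i$ then $\alpha_i\in\Bbb F_q$ and $g$ would have a linear factor, contradicting irreducibility; so $\phi$ has no fixed point among the three roots, and a fixed-point-free permutation of a $3$-element set is necessarily a $3$-cycle, hence even. One should also make sure the identity $D(g)=\delta^2$ is stated in a form valid in characteristic $2$ (the discriminant formula in the excerpt is the general one; in characteristic $2$ one typically interprets things via $\prod_{i<j}(\alpha_i-\alpha_j)^2$, which still equals the displayed polynomial in $b,c,d$). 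With that in hand the argument is complete in a few lines.

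Remark: an alternative, more computational route avoids the splitting field entirely — write $g={\tt x}^3+b{\tt x}^2+c{\tt x}+d$, depress it to ${\tt x}^3+p{\tt x}+r$ by ${\tt x}\mapsto{\tt x}-b/3$ (in odd characteristic), note this shift doesn't change $D$, and then relate $D=-4p^3-27r^2$ to a norm from $\Bbb F_{q^3}$; but the Galois-theoretic argument above is cleaner and handles all characteristics uniformly, so that is the one I would present.
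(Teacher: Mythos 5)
Your proof is correct and takes essentially the same route as the paper: both identify $\Bbb F_{q^3}$ as the splitting field and use that the Galois group, generated by Frobenius, permutes the three roots as the $3$-cycle group $A_3$, so the discriminant must be a nonzero square. The only difference is that where the paper cites a textbook result (Galois group contained in $A_n$ forces a square discriminant), you prove that fact directly via $\delta=\prod_{i<j}(\alpha_i-\alpha_j)$ being Frobenius-invariant and $D(g)=\delta^2$, which is a self-contained version of the same argument and transparently covers characteristic $2$ as well.
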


\begin{proof}
$\Bbb F_{q^3}$ is the splitting field of $g$ over $\Bbb F_q$. The Galois group $\text{Aut}(\Bbb F_{q^3}/\Bbb F_q)$ of $g$ over $\Bbb F_q$, as a permutation group of the roots of $g$, is $\langle(1,2,3)\rangle=A_3$.  By \cite[Chapter V, Corollary 4.7]{Hun}, $D(g)$ is a square of $\Bbb F_q^*$. 
\end{proof}

\begin{proof}[Proof of Theorem~\ref{T1.1}]
($\Leftarrow$) Recall that $q$ is odd and $t\in \Bbb F_q^*$.
Let $y\in\Bbb F_{q^2}$. We show that equation 
\begin{equation}\label{3.1}
-x+tx^q+x^{2q-1}=y
\end{equation}
has at least one solution $x\in\Bbb F_{q^2}$. If $y\in\Bbb F_q$, $x=(\frac yt)^q$ is a solution. So we assume $y\in\Bbb F_{q^2}\setminus\Bbb F_q$.

Assume for the time being that $x\in\Bbb F_{q^2}$ satisfies \eqref{3.1}. Put $\tau=x^{-q}y=t+x^{q-1}-x^{1-q}$. Since $(x^{q-1}-x^{1-q})^q=-(x^{q-1}-x^{1-q})$,  we may write
\begin{equation}\label{3.3}
\tau=t+\epsilon u,
\end{equation}
where $\epsilon\in\Bbb F_{q^2}$, $\epsilon^{q-1}=-1$, and $u\in\Bbb F_q$. Making the substitution $x=(\frac y\tau)^q$ in \eqref{3.1}, we have
\[
-\Bigl(\frac y\tau \Bigr)^q+t\,\frac y\tau+\Bigl(\frac y\tau \Bigr)^{2-q}=y,
\]
i.e.,
\begin{equation}\label{3.4}
\frac{t-\tau}\tau+y^{1-q}\frac{\tau^q}{\tau^2}-y^{q-1}\frac 1{\tau^q}=0.
\end{equation}
In the light of \eqref{3.3}, we can write \eqref{3.4} as
\begin{equation}\label{3.5}
\frac{-\epsilon u}{t+\epsilon u}+y^{1-q}\frac{t-\epsilon u}{(t+\epsilon u)^2}-y^{q-1}\frac 1{t-\epsilon u}=0.
\end{equation}
A routine computation shows that \eqref{3.5} is equivalent to 
\begin{equation}\label{3.6}
u^3-\frac{y^{q-1}-y^{1-q}}\epsilon u^2+\frac{2-2t(y^{q-1}+y^{1-q})}{\epsilon^2}u+2\,\frac{y^{q-1}-y^{1-q}}{\epsilon^3}=0.
\end{equation}
Note that the left side of \eqref{3.6} is a cubic polynomial in $u$ with coefficients in $\Bbb F_q$.

At this point, it is necessary to reverse the above reasoning to ensure the correctness of the logic. If $u\in\Bbb F_q$ is a solution of \eqref{3.6}, then $\tau=t+\epsilon u$ is a solution of \eqref{3.4}, and consequently, $x=(\frac y\tau)^q$ is a solution of \eqref{3.1}. Therefore, all we have to do is to show that 
\begin{equation}\label{3.7}
g({\tt u}):={\tt u}^3-\frac{y^{q-1}-y^{1-q}}\epsilon{\tt u}^2+\frac{2-2t(y^{q-1}+y^{1-q})}{\epsilon^2}{\tt u}+2\,\frac{y^{q-1}-y^{1-q}}{\epsilon^3}\in\Bbb F_q[{\tt u}]
\end{equation}
is reducible, where $\epsilon\in\Bbb F_{q^2}$, $\epsilon^{q-1}=-1$. (Note that the reducibility of $g$ is independent of the choice of $\epsilon$.)

If $y^{q-1}-y^{1-q}=0$, $g$ is clearly reducible. So we assume $y^{q-1}-y^{1-q}\ne 0$. Since $(y^{q-1}-y^{1-q})^q=-(y^{q-1}-y^{1-q})$, we may choose $\epsilon=y^{q-1}-y^{1-q}$. Let $s=y^{q-1}+y^{1-q}\in\Bbb F_q$. Then 
\[
s^2-4=(y^{q-1}+y^{1-q})^2-4=(y^{q-1}-y^{1-q})^2=\epsilon^2,
\]
which is a nonsquare in $\Bbb F_q^*$ since $\epsilon\notin\Bbb F_q$. We can express $g({\tt u})$ in terms of $t$ and $s$:
\[
g({\tt u})={\tt u}^3-{\tt u}^2+\frac{2-2ts}{s^2-4}{\tt u}+\frac 2{s^2-4}.
\]
We proceed to compute the discriminant of $g$. We have 
\[
\begin{split}
D(g)=\,&-4\Bigl(\frac{2-2ts}{s^2-4}\Bigr)^3-27\Bigl(\frac 2{s^2-4}\Bigr)^2+\Bigl(\frac{2-2ts}{s^2-4}\Bigr)^2+4\cdot\frac 2{s^2-4}\cr
&-18\cdot\frac{2-2ts}{s^2-4}\cdot\frac 2{s^2-4}\cr
=\,&-\frac 4{(s^2-4)^3}\bigl[-8(ts-1)^3+27(s^2-4)-(ts-1)^2(s^2-4)-2(s^2-4)^2\cr
&-18(ts-1)(s^2-4)\bigr]\cr
=\,&-\frac 4{(s^2-4)^3}\bigl[-8(t^3s^3-3t^2s^2+3ts-1)\cr
&+(s^2-4)(27-t^2s^2+2ts-1-18ts+18)-2(s^4-8s^2+16)\bigr]\cr
=\,&-\frac 4{(s^2-4)^3}\bigl[-8(-2ts^3+6s^2+3ts-1)\cr
&+(s^2-4)(2s^2-16ts+44)-2(s^4-8s^2+16)\bigr]\cr
=\,&-\frac{16}{(s^2-4)^3}(s^2+10ts-50)\cr
=\,&-\frac{16(s+5t)^2}{(s^2-4)^3}.
\end{split}
\]
Since $-16(s+5t)^2$ is a square of $\Bbb F_q$ and $s^2-4$ is a nonsquare in $\Bbb F_q^*$, $D(g)$ is not a square in $\Bbb F_q^*$. By Lemma~\ref{L3.1}, $g$ is reducible in $\Bbb F_q[{\tt u}]$.
\end{proof}

\section{Computation of $\sum_{x\in\Bbb F_{q^2}}f(x)^s$}
  
Let $f=-{\tt x}+t{\tt x}^q+t{\tt x}^{2q-1}$, where $t\in\Bbb F_q^*$ and $q>2$. Let $0<s<q^2-1$ and write $s=\alpha+\beta q$, where $0\le \alpha,\beta\le q-1$. We have
\[
\sum_{x\in\Bbb F_{q^2}}f(x)^s=\sum_{x\in\Bbb F_{q^2}^*}f(x)^s=\sum_{x\in\Bbb F_{q^2}^*} x^{\alpha q+\beta}(x^{q-1}-x^{1-q}+t)^{\alpha+\beta q}.
\]
This sum is clearly $0$ when $\alpha+\beta q\not\equiv 0\pmod{q-1}$.

Now assume $\alpha+\beta q\equiv 0\pmod{q-1}$. Since $0<\alpha+\beta q<q^2-1$, we must have $\alpha+\beta=q-1$. The above computation continues as follows:
\[
\kern-2.5cm
\begin{split}
&\sum_{x\in\Bbb F_{q^2}}f(x)^{\alpha+\beta q}\cr
=\,&\sum_{x\in\Bbb F_{q^2}^*}x^{(\alpha+1)(q-1)}(x^{q-1}-x^{1-q}+t)^\alpha(x^{1-q}-x^{q-1}+t)^\beta\cr
\end{split}
\]
\[
\begin{split}
=\,&\sum_{x\in\Bbb F_{q^2}^*}x^{(\alpha+1)(q-1)}\sum_{i,j}\binom \alpha i\binom\beta j(x^{q-1}-x^{1-q})^{i+j}(-1)^jt^{\alpha+\beta-(i+j)}\cr
=\,&\sum_{x\in\Bbb F_{q^2}^*}x^{(\alpha+1)(q-1)}\sum_{i,j}\binom \alpha i\binom\beta j x^{(i+j)(q-1)}(1-x^{2(1-q)})^{i+j}(-1)^jt^{-(i+j)}\cr
=\,&\sum_{x\in\Bbb F_{q^2}^*}\sum_{i,j}\binom \alpha i\binom\beta j x^{(q-1)(\alpha+1+i+j)}(-1)^jt^{-(i+j)}\sum_k\binom{i+j}k(-1)^kx^{-2k(q-1)}\cr
=\,&\sum_{x\in\Bbb F_{q^2}^*}\sum_{i,j,k}\binom \alpha i\binom\beta j\binom{i+j}k(-1)^{k+j}t^{-(i+j)}x^{(q-1)(\alpha+1+i+j-2k)}\cr
=\,&-\sum_{\substack{i,j,k\ge 0\cr \alpha+1+i+j-2k\equiv 0\,(\text{mod}\, q+1)}}\binom \alpha i\binom\beta j\binom{i+j}k(-1)^{k+j}t^{-(i+j)}.
\end{split}
\]
It is easy to see that when $0\le i\le\alpha$, $0\le j\le \beta$, and $0\le k\le i+j$, we have
\[
-(q+1)<\alpha+1+i+j-2k<2(q+1).
\]
Therefore,
\begin{equation}\label{4.1}
\sum_{x\in\Bbb F_{q^2}}f(x)^{\alpha+\beta q}=-\sum_{\substack{i,j,k\ge 0\cr \alpha+1+i+j-2k= 0,\, q+1}}\binom \alpha i\binom\beta j\binom{i+j}k(-1)^{k+j}t^{-(i+j)}.
\end{equation}
The sum at the right side of \eqref{4.1} will go through some further cosmetic changes in Section 7, but its present form is suitable for the proof of the necessity part of Theorem~\ref{T1.1}.  

\medskip
\noindent{\bf Remark.}
In \cite{Hou}, binomials of the form $h=t{\tt x}+{\tt x}^{2q-1}$, where $t\in\Bbb F_q^*$, were considered. The power sum $\sum_{x\in\Bbb F_{q^2}}h(x)^{\alpha+\beta q}$, where $\alpha,\beta\ge 0$ and $\alpha+\beta=q-1$, was expressed in terms of two sums involving binomial coefficients; see \cite[Lemma~3.2]{Hou}. Those two sums in \cite{Hou} are over one variable and essentially depend only on $\alpha$ but not on $q$ (hence not on $\beta$). However,  the sum in \eqref{4.1} is over two variables and depends on both $\alpha$ and $q$. These differences are the reason that the characteristic free approach in \cite{Hou} has not woked for the current situation. 

\section{Proof of Theorem~\ref{T1.1}, Necessity}

Let $p=\text{char}\,\Bbb F_q$ and let $\Bbb Z_p$ be the ring of $p$-adic integers. For $z\in\Bbb Z_p$ and $a\in\Bbb Z$ with $a\ge 0$, written in the form $z=\sum_{n\ge 0}z_np^n$, $a=\sum_{n\ge 0}a_np^n$,
$0\le z_n,a_n\le p-1$, we have
\[
\binom za\equiv\prod_{n\ge 0}\binom{z_n}{a_n}\pmod p.
\]
Therefore, if $z,z'\in\Bbb Z_p$ such that $\nu_p(z-z')>\log_pa$, where $\nu_p$ is the $p$-adic order, we have $\binom za\equiv\binom{z'}a\pmod p$.

\begin{lem}\label{L5.1}
Let $z\in\Bbb F_q^*$ and write ${\tt x}^2+{\tt x}-z=({\tt x}-r_1)({\tt x}-r_2)$, $r_1,r_2\in\Bbb F_{q^2}$. Then
\[
\sum_{0\le k\le\frac q2}\binom{-k}k z^k=
\begin{cases}
\displaystyle\frac 12&\text{if}\ r_1=r_2\in\Bbb F_q, \vspace{2mm}\cr
1&\text{if}\ r_1,r_2\in\Bbb F_q,\ r_1\ne r_2, \vspace{1mm} \cr
0&\text{if}\ r_1,r_2\notin\Bbb F_q.
\end{cases}
\]
\end{lem}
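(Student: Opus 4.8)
The plan is to recognize the sum $\sum_{0\le k\le q/2}\binom{-k}{k}z^k$ as (the truncation of) a generating function associated with the recurrence whose characteristic polynomial is ${\tt x}^2+{\tt x}-z$. Formally, over $\Bbb Z_p$ one has the identity $\sum_{k\ge 0}\binom{-k}{k}w^k = \frac{1}{\sqrt{1+4w}}\cdot(\text{something})$; more usefully, the numbers $c_k=\binom{-k}{k}$ are (up to sign) the diagonal Catalan-like coefficients appearing in $\frac{1}{1-w\,\cdot(\text{shift})}$. Concretely, I would start from the closed form: if $r_1,r_2$ are the roots of ${\tt x}^2+{\tt x}-z$, so $r_1+r_2=-1$ and $r_1r_2=-z$, then I claim $\sum_{k\ge 0}\binom{-k}{k}z^k$, interpreted $p$-adically, telescopes to a rational expression in $r_1,r_2$. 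The cleanest route: use the known combinatorial identity $\sum_{k=0}^{n}\binom{n-k}{k}(-1)^k z^k \cdot(\dots)$ — i.e. relate $\binom{-k}{k}=(-1)^k\binom{2k-1}{k}$ and then invoke the generating function $\sum_k \binom{2k}{k}\frac{w^k}{k+1}$ or directly $\sum_k\binom{2k-1}{k}w^k$.

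First I would rewrite $\binom{-k}{k}=(-1)^k\binom{2k-1}{k}$ for $k\ge 1$ (and the $k=0$ term is $1$), so the sum becomes $1+\sum_{1\le k\le q/2}(-1)^k\binom{2k-1}{k}z^k$. Next, I would pass to $\Bbb Z_p$ and use the observation from the paragraph preceding the lemma: for $k\le q/2$, the binomial coefficient $\binom{-k}{k}$ only depends on the residues of $-k$ and $k$ mod the relevant powers of $p$, and in fact $\binom{-k}{k}\equiv\binom{q-k}{k}\pmod p$ when $k\le q/2$, since $\nu_p(-k-(q-k))=\nu_p(q)$ is large enough (indeed $q=p^m$ and $k\le q/2 < q$). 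So the sum equals $\sum_{0\le k\le q/2}\binom{q-k}{k}z^k$ in $\Bbb F_q$. Now $\sum_{k}\binom{q-k}{k}z^k$ (over all $k$ with $2k\le q$) is exactly a classical Lucas/Fibonacci-type sum: $\sum_{k}\binom{n-k}{k}z^k = \frac{r_1^{n+1}-r_2^{n+1}}{r_1-r_2}$ where $r_{1,2}$ are roots of ${\tt x}^2-{\tt x}-z$. I need to match signs: here the polynomial is ${\tt x}^2+{\tt x}-z$, so I would substitute accordingly and get $\sum_{0\le 2k\le q}\binom{q-k}{k}z^k = \frac{r_1^{q+1}-r_2^{q+1}}{r_1-r_2}$ (with possibly a sign $(-1)^{\text{something}}$ that I'll track carefully), where now $r_1,r_2$ are the roots of ${\tt x}^2+{\tt x}-z$ as in the statement.

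Then I would evaluate $\frac{r_1^{q+1}-r_2^{q+1}}{r_1-r_2}$ using Frobenius. If $r_1,r_2\in\Bbb F_q$ and distinct: $r_i^{q+1}=r_i^2$, and $\frac{r_1^2-r_2^2}{r_1-r_2}=r_1+r_2=-1$; combined with the sign bookkeeping this should land on $1$. If $r_1=r_2=r\in\Bbb F_q$: the expression $\frac{r_1^{q+1}-r_2^{q+1}}{r_1-r_2}$ is a limit/derivative, equal to $(q+1)r^q = (q+1)r = r$ in characteristic $p$... but $r=-1/2$ here (double root of ${\tt x}^2+{\tt x}-z$ forces $z=-1/4$ and $r=-1/2$), so after sign tracking this gives $1/2$. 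If $r_1,r_2\notin\Bbb F_q$: they are conjugate, $r_2=r_1^q$, so $r_1^{q+1}=r_1r_1^q=r_1r_2=-z$ and likewise $r_2^{q+1}=-z$, hence the numerator vanishes and the sum is $0$. These three cases are exactly the trichotomy in the statement.

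The main obstacle I anticipate is twofold and entirely bookkeeping: (1) getting the sign conventions right when converting $\binom{-k}{k}$ to $\binom{q-k}{k}$ and relating ${\tt x}^2+{\tt x}-z$ to ${\tt x}^2-{\tt x}-z$, so that all three cases come out with the stated constants ($\tfrac12$, $1$, $0$) rather than their negatives; and (2) justifying rigorously that the truncation at $k\le q/2$ is harmless — i.e. that terms with $2k>q$ either do not occur in the natural generating-function identity over $\Bbb F_q$ or contribute $0$, and that the congruence $\binom{-k}{k}\equiv\binom{q-k}{k}\pmod p$ holds throughout the range $0\le k\le q/2$ (this uses $\nu_p(q)=m\ge 1 > \log_p k$ since $k<q=p^m$). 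The derivative/limit argument in the repeated-root case also needs care: rather than literally differentiating, I would note $\sum_{0\le k\le q/2}\binom{q-k}{k}(-1/4)^k$ satisfies the Fibonacci-type recurrence and compute it directly, or specialize the Lucas-sequence formula $\frac{r_1^{q+1}-r_2^{q+1}}{r_1-r_2}$ by treating $r_1,r_2$ as formal roots in $\Bbb F_q[\sqrt{1+4z}]$ and taking the limit $1+4z\to 0$ inside $\Bbb F_q$, which is legitimate since the expression is a polynomial in the symmetric functions. Everything else is routine.
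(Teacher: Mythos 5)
Your proposal is correct, and it reaches the lemma by a genuinely different route from the paper. You replace $\binom{-k}{k}$ by $\binom{q-k}{k}$ for \emph{every} $k\le \frac q2$ using the $p$-adic congruence stated before the lemma (valid since $\nu_p(-k-(q-k))=\nu_p(q)=m>\log_p k$ for $k<q$), and then the sum $\sum_{0\le k\le q/2}\binom{q-k}{k}z^k$ is exactly the classical Lucas-type sum $h_q(\rho_1,\rho_2)=\sum_{i+j=q}\rho_1^i\rho_2^j$, where $\rho_1,\rho_2$ are the roots of ${\tt x}^2-{\tt x}-z$, i.e.\ $\rho_i=-r_i$; since $q$ is odd this gives $-\sum_{i+j=q}r_1^ir_2^j=-\frac{r_1^{q+1}-r_2^{q+1}}{r_1-r_2}$ in the separable case, and the three values $\frac12$, $1$, $0$ then follow from Frobenius exactly as you say (the signs do come out right). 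The paper instead writes $\binom{-k}{k}$ as the constant term of ${\tt x}^{-k}(1+{\tt x})^{-k}$, uses the same congruence only to extend the range to $k\le q-1$ (the extra terms vanish), sums a geometric series, and extracts the constant term by partial fractions — arriving at the same expression $-\frac{r_1^{q+1}-r_2^{q+1}}{r_1-r_2}$. Your route is more elementary and shorter for this lemma; note only that the ``limit $1+4z\to 0$'' language should be dropped in favor of what you already indicate: $h_q$ is literally the polynomial $\sum_{i+j=q}\rho_1^i\rho_2^j$ in the symmetric functions, which at a double root $\rho$ evaluates to $(q+1)\rho^q=\rho=\frac12$ in characteristic $p$, so no limiting argument is needed. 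What the paper's heavier constant-term machinery buys is reuse: the same technique carries over verbatim to Lemmas~5.2 and 5.3 (the weighted sums $\binom{-k}{k+1}$ and $(k+1)\binom{-k}{k+1}$), where your Fibonacci shortcut would need additional (though available) Lucas-sequence identities.
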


\begin{proof}
We denote the constant term of a Laurent series in ${\tt x}$ by $\text{ct}(\ )$. We have
\begin{equation}\label{5.2}
\begin{split}
\sum_{0\le k\le\frac q2}\binom{-k}k z^k\,&=\sum_{0\le k\le q-1}\binom{-k}k z^k\cr
&\kern 0.8cm \textstyle \text{(when $\frac q2<k\le q-1$, $\binom{-k}k\equiv\binom{q-k}k=0\pmod p$)}\cr
&=\sum_{0\le k\le q-1}\text{ct}\Bigl(\frac 1{{\tt x}^k(1+{\tt x})^k}\Bigr)\cdot z^k\cr
&=\text{ct}\biggl[\,\sum_{0\le k\le q-1}\Bigl(\frac z{{\tt x}(1+{\tt x})}\Bigr)^k\,\biggr]\cr
&=\text{ct}\biggl(\frac{1-(\frac z{{\tt x}(1+{\tt x})})^q}{1-\frac z{{\tt x}(1+{\tt x})}}\biggr)\cr
&=\text{ct}\Bigl[\frac{{\tt x}(1+{\tt x})}{{\tt x}(1+{\tt x})-z}\Bigl(1-z\Bigl(\frac 1{\tt x}-\frac 1{1+{\tt x}}\Bigr)^q\Bigr)\Bigr]\cr
&=\text{ct}\Bigl(\frac{{\tt x}(1+{\tt x})}{{\tt x}(1+{\tt x})-z}\cdot\frac{-z}{{\tt x}^q}\Bigr)\cr
&=\text{ct}\Bigl[\Bigl(1+\frac z{{\tt x}(1+{\tt x})-z}\Bigr)\cdot\frac{-z}{{\tt x}^q}\Bigr]\cr
&=\text{ct}\Bigl(\frac{-z^2}{{\tt x}^q}\cdot\frac 1{{\tt x}^2+{\tt x}-z}\Bigr)\cr
&=\text{ct}\Bigl(\frac{-z^2}{{\tt x}^q}\cdot\frac 1{({\tt x}-r_1)({\tt x}-r_2)}\Bigr).
\end{split}
\end{equation}

First assume $r_1=r_2$. We must have $r_1=-\frac 12$. By \eqref{5.2}, we have 
\[
\begin{split}
\sum_{0\le k\le\frac q2}\binom{-k}k z^k\,&=\text{ct}\Bigl(\frac{-z^2}{{\tt x}^q}({\tt x}-r_1)^{-2}\Bigr)\cr
&=\text{ct}\Bigl(\frac{-z^2}{{\tt x}^q} \,r_1^{-2}\Bigl(1-\frac{\tt x}{r_1}\Bigr)^{-2}\Bigr)\cr
&=-\frac{z^2}{r_1^2}\binom{-2}q\Bigl(-\frac 1{r_1}\Bigr)^q\cr
&=-\frac{z^2}{r_1^{q+2}} \kern 3cm \textstyle{(\binom{-2}q\equiv -1\pmod p)}\cr
&=-\frac{z^2}{r_1^3}\cr
&=-r_1\kern 3.5cm (-z=r_1^2)\cr
&=\frac 12.
\end{split}
\]

Now assume $r_1\ne r_2$. By \eqref{5.2}, we have
\[
\begin{split}
\sum_{0\le k\le\frac q2}\binom{-k}k z^k\,&=\text{ct}\Bigl[\frac{-z^2}{{\tt x}^q}\Bigl(\frac 1{{\tt x}-r_1}-\frac 1{{\tt x}-r_2}\Bigr)\frac 1{r_1-r_2}\Bigr]\cr
&=\text{ct}\Bigl[\frac{z^2}{r_1-r_2}\cdot\frac 1{{\tt x}^q}\Bigl(\frac 1{r_1}\cdot\frac 1{1-\frac{\tt x}{r_1}}-\frac 1{r_2}\cdot\frac 1{1-\frac{\tt x}{r_2}}\Bigr)\Bigr]\cr
&=\frac{z^2}{r_1-r_2}\Bigl(\frac 1{{r_1}^{q+1}}-\frac 1{{r_2}^{q+1}}\Bigr)\cr
&=\frac{-z^2}{(r_1r_2)^{q+1}}\cdot\frac{r_1^{q+1}-r_2^{q+1}}{r_1-r_2}\cr
&=-\frac{r_1^{q+1}-r_2^{q+1}}{r_1-r_2}\cr
&=
\begin{cases}
\displaystyle -\frac{r_1^2-r_2^2}{r_1-r_2}=-(r_1+r_2)=1&\text{if}\ r_1,r_2\in\Bbb F_q, \vspace{2mm}\cr
0&\text{if $r_1,r_2$ are conjugates over $\Bbb F_q$}.
\end{cases} 
\end{split}
\]
This completes the proof of Lemma~\ref{L5.1}.
\end{proof}

\begin{lem}\label{L5.2}
Let $z\in\Bbb F_q^*$ and assume that ${\tt x}^2+{\tt x}-z$ has two distinct roots in $\Bbb F_q$. Then
\[
\sum_{0\le k\le\frac {q-1}2}\binom{-k}{k+1} z^k=1.
\]
\end{lem}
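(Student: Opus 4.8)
The plan is to mimic the generating-function argument used for Lemma~\ref{L5.1}, but now tracking the sum $\sum_k \binom{-k}{k+1}z^k$ instead of $\sum_k \binom{-k}{k}z^k$. First I would note that the upper limit of summation can be relaxed: since $\binom{-k}{k+1}\equiv \binom{q+1-k}{k+1}\pmod p$ (because $-k \equiv q+1-k$ when we work modulo $p$ in the binomial coefficient via Lucas, after suitably lifting to $\Bbb Z_p$), the terms with $\frac{q-1}{2} < k \le q-1$ vanish, so the sum equals $\sum_{0\le k\le q-1}\binom{-k}{k+1}z^k$. Then I would use the constant-term representation $\binom{-k}{k+1} = \mathrm{ct}\bigl(\frac{1}{{\tt x}^{k+1}(1+{\tt x})^{k+1}}\bigr)\cdot(1+{\tt x})^{?}$ — more carefully, $\binom{-k}{k+1}$ is the coefficient of ${\tt x}^{k+1}$ in $(1+{\tt x})^{-k}$, which is $\mathrm{ct}\bigl({\tt x}^{-(k+1)}(1+{\tt x})^{-k}\bigr)$. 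So the whole sum becomes $\mathrm{ct}\bigl[\frac{1}{{\tt x}}\sum_{0\le k\le q-1}\bigl(\frac{z}{{\tt x}(1+{\tt x})}\bigr)^k\bigr]$, and I would sum the finite geometric series exactly as in \eqref{5.2}, using that the $q$-th power term collapses mod $p$.

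The second step is to push the resulting rational/Laurent expression through the same simplification chain as in Lemma~\ref{L5.1}. Following that computation, $\sum_{0\le k\le q-1}\bigl(\frac{z}{{\tt x}(1+{\tt x})}\bigr)^k$ reduces (modulo the relevant vanishing) to $\frac{{\tt x}(1+{\tt x})}{{\tt x}(1+{\tt x})-z}\cdot\frac{-z}{{\tt x}^q}$, so after multiplying by the extra $\frac{1}{{\tt x}}$ the sum in question is $\mathrm{ct}\bigl(\frac{-z}{{\tt x}^{q+1}}\cdot\frac{{\tt x}(1+{\tt x})}{{\tt x}^2+{\tt x}-z}\bigr) = \mathrm{ct}\bigl(\frac{-z(1+{\tt x})}{{\tt x}^{q}({\tt x}^2+{\tt x}-z)}\bigr)$. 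With ${\tt x}^2+{\tt x}-z = ({\tt x}-r_1)({\tt x}-r_2)$, $r_1,r_2\in\Bbb F_q$ distinct, I would do the partial fraction decomposition of $\frac{1+{\tt x}}{({\tt x}-r_1)({\tt x}-r_2)}$ and extract the constant term of $\frac{-z}{{\tt x}^q}$ times each piece, using $\mathrm{ct}\bigl(\frac{1}{{\tt x}^q}\cdot\frac{1}{1-{\tt x}/r_i}\bigr) = r_i^{-q} = r_i^{-1}$ (since $r_i\in\Bbb F_q^*$; note $r_i\ne 0$ because $z\ne 0$). This expresses the answer as a rational function of $r_1,r_2$, which I then simplify using the symmetric functions $r_1+r_2=-1$, $r_1 r_2=-z$ to get the constant $1$.

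The main obstacle I anticipate is bookkeeping in two places. First, justifying the extension of the summation range and the collapse of the $q$-th power term requires being careful about which binomial-coefficient congruences mod $p$ are being invoked — in particular $\binom{-2}{q}\equiv -1$, $\binom{q-k}{k}\equiv 0$ for $\frac q2<k<q$ — and checking the analogous statement $\binom{q+1-k}{k+1}\equiv 0$ holds for $\frac{q-1}{2}<k\le q-1$; the boundary case $k=\frac{q-1}{2}$ (when $q$ is odd) and the case $k$ near $q-1$ need individual attention, and one must confirm no stray term survives. Second, the partial-fraction-plus-constant-term extraction produces an expression like $\frac{-z}{r_1-r_2}\bigl(\frac{1+r_1}{r_1} - \frac{1+r_2}{r_2}\bigr)$ (up to a sign I'd fix in the writing), and collapsing this to exactly $1$ via $r_1 r_2 = -z$ and $r_1+r_2=-1$ is a short but error-prone algebraic step; I would do it slowly and cross-check with a small numerical example (e.g. $q=5$, a concrete $z$) to make sure the constant is $1$ and not, say, $\frac12$ or $-1$.
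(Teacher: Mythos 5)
Your plan follows the same constant-term strategy as the paper's proof, but two of the steps you describe are genuinely wrong, and the errors do not cancel. First, the range extension fails at the endpoint: your congruence $\binom{-k}{k+1}\equiv\binom{q+1-k}{k+1}\pmod p$ is not justified, since $q+1-k$ and $-k$ differ by $q+1$, which is not divisible by $p$; the legitimate shift is by $q$, giving $\binom{-k}{k+1}\equiv\binom{q-k}{k+1}\pmod p$ only for $k+1<q$. That does kill the terms with $\frac{q-1}2<k<q-1$, but at $k=q-1$ one has $\binom{-(q-1)}{q}\equiv-1\pmod p$ (e.g.\ $q=5$: $\binom{-4}{5}=-56\equiv-1$), and since $z^{q-1}=1$ the restricted sum equals $\sum_{0\le k\le q-1}\binom{-k}{k+1}z^k+1$, not the full sum itself; a stray term \emph{does} survive. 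Second, the ``relevant vanishing'' you import from Lemma~\ref{L5.1} is no longer available. In \eqref{5.2} the pieces $\frac{{\tt x}(1+{\tt x})}{{\tt x}(1+{\tt x})-z}\cdot 1$ and $\frac{{\tt x}(1+{\tt x})}{{\tt x}(1+{\tt x})-z}\cdot\frac{z}{(1+{\tt x})^q}$ could be discarded because their numerators are divisible by ${\tt x}$, so they are power series with zero constant term; after multiplying by your extra $\frac1{\tt x}$ they become $\frac{1+{\tt x}}{{\tt x}(1+{\tt x})-z}$ and $\frac{z(1+{\tt x})^{1-q}}{{\tt x}(1+{\tt x})-z}$, whose constant terms are $-\frac1z$ and $-1$. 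The paper keeps them, and they contribute $-\frac1z(1+z)$.

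These omissions change the answer: carrying out exactly the computation you propose gives $\mathrm{ct}\bigl(\frac{-z(1+{\tt x})}{{\tt x}^{q}({\tt x}^2+{\tt x}-z)}\bigr)=\frac{z}{r_1-r_2}\bigl(\frac{1+r_1}{r_1^2}-\frac{1+r_2}{r_2^2}\bigr)=\frac{1+z}{z}$, using $r_1+r_2=-1$, $r_1r_2=-z$ (note also that your anticipated expression has $\frac{1+r_i}{r_i}$ where it should be $\frac{1+r_i}{r_i^2}$, since $\mathrm{ct}\bigl(\frac1{{\tt x}^q}\cdot\frac1{{\tt x}-r_i}\bigr)=-r_i^{-q-1}=-r_i^{-2}$). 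For $q=5$, $z=2$ this is $4$, not $1$, so your planned cross-check would have exposed the gap. Restoring the two missing contributions, namely $+1$ from the $k=q-1$ term and $-\frac1z-1$ from the two non-vanishing pieces, yields $\frac{1+z}{z}-\frac1z-1+1=1$, which is precisely how the paper's proof of this lemma is organized. So the method is the right one, but both places you dismissed as bookkeeping are where the actual content of the proof lies, and as written the argument does not establish the stated value.
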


\begin{proof}
The proof is similar to that of Lemma~\ref{L5.1}. Write ${\tt x}^2+{\tt x}-z=({\tt x}-r_1)({\tt x}-r_2)$, where $r_1,r_2\in\Bbb F_q$, $r_1\ne r_2$. We have
\[
\begin{split}
&\sum_{0\le k\le\frac {q-1}2}\binom{-k}{k+1} z^k\cr
=\,&\sum_{0\le k\le q-1}\binom{-k}{k+1} z^k-\binom{-(q-1)}q z^{q-1}\cr
&\kern 2cm \textstyle\text{(when $\frac {q-1}2<k<q-1$, $\binom{-k}{k+1}\equiv 0\pmod p$)}\cr 
=\,&\sum_{0\le k\le q-1}\text{ct}\Bigl(\frac 1{{\tt x}^{k+1}(1+{\tt x})^k}\Bigr)\cdot z^k+1\cr
& \kern 2cm \textstyle\binom{-(q-1)}q\equiv\binom{q^2-(q-1)}q=\binom{1+(q-1)q}q\equiv\binom{q-1}1\equiv -1\pmod p)\cr
=\,&\text{ct}\biggl[\,\frac 1{\tt x}\sum_{0\le k\le q-1}\Bigl(\frac z{{\tt x}(1+{\tt x})}\Bigr)^k\,\biggr]+1\cr
=\,&\text{ct}\Bigl[\frac 1{\tt x}\cdot\frac{{\tt x}(1+{\tt x})}{{\tt x}(1+{\tt x})-z}\Bigl(1-\frac z{{\tt x}^q}+\frac z{(1+{\tt x})^q}\Bigr)\Bigr]+1\cr
=\,&\text{ct}\Bigl[\frac{1+{\tt x}}{{\tt x}(1+{\tt x})-z}\Bigl(1+\frac z{(1+{\tt x})^q}\Bigr)-\frac z{{\tt x}^{q+1}}\cdot \frac{{\tt x}(1+{\tt x})}{{\tt x}(1+{\tt x})-z}\Bigr]+1\cr
=\,&-\frac 1z(1+z)+\text{ct}\Bigl[\frac {-z}{{\tt x}^{q+1}}\Bigl(1+\frac z{{\tt x}(1+{\tt x})-z}\Bigr)\Bigr]+1\cr
\end{split}
\]
\[
\begin{split}
=\,&-\frac 1z+\text{ct}\Bigl(\frac {-z^2}{{\tt x}^{q+1}}\cdot\frac 1{({\tt x}-r_1)({\tt x}-r_2)}\Bigr)\cr
=\,&-\frac 1z+\frac{z^2}{r_1-r_2}\Bigl(\frac 1{r_1^{q+2}}-\frac 1{r_2^{q+2}}\Bigr)\cr
&\kern 2cm \text{(see the computation in the proof of Lemma~\ref{L5.1})}\cr
=\,&-\frac 1z+\frac{z^2}{r_1-r_2}\Bigl(\frac 1{r_1^3}-\frac 1{r_2^3}\Bigr)\cr
=\,&-\frac 1z-\frac{z^2}{(r_1r_2)^3}\cdot \frac{r_1^3-r_2^3}{r_1-r_2}\cr
=\,&-\frac 1z+\frac 1z(r_1^2+r_1r_2+r_2^2)\kern 2cm (r_1r_2=-z)\cr
=\,&-\frac 1z+\frac 1z\bigl[(r_1+r_2)^2-r_1r_2\bigr]\cr
=\,&-\frac 1z+\frac 1z(1+z)\cr
=\,&1.
\end{split}
\]
\end{proof}

\begin{lem}\label{L5.3}
Let $z\in\Bbb F_q^*$ and assume that ${\tt x}^2+{\tt x}-z$ has two distinct roots in $\Bbb F_q$. Then
\[
\sum_{0\le k\le\frac {q-1}2}(k+1)\binom{-k}{k+1} z^k=\frac{2z}{1+4z}.
\]
\end{lem}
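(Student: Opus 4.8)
The plan is to mimic the constant-term calculus in the proofs of Lemmas~\ref{L5.1} and \ref{L5.2}; the only new feature is the handling of the extra factor $k+1$. Write $w=\frac z{{\tt x}(1+{\tt x})}$ as an abbreviation. From $\binom{-k}{k+1}=\text{ct}\bigl(\frac 1{{\tt x}^{k+1}(1+{\tt x})^k}\bigr)$ we get $(k+1)\binom{-k}{k+1}z^k=(k+1)\,\text{ct}\bigl(\frac{w^k}{\tt x}\bigr)$. First I would enlarge the summation range from $0\le k\le\frac{q-1}2$ to $0\le k\le q-1$: for $\frac{q-1}2<k<q-1$ we have $\binom{-k}{k+1}\equiv 0\pmod p$ (exactly as in the proof of Lemma~\ref{L5.2}), while the $k=q-1$ term carries the factor $k+1=q\equiv 0\pmod p$; hence the added terms all vanish and
\[
\sum_{0\le k\le\frac{q-1}2}(k+1)\binom{-k}{k+1}z^k=\text{ct}\Bigl(\frac 1{\tt x}\sum_{k=0}^{q-1}(k+1)w^k\Bigr).
\]

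Next I would evaluate the truncated series over $\Bbb F_q$. Since $1-w^q=(1-w)^q$ in characteristic $p$, we have $\sum_{k=0}^{q-1}w^{k+1}=\frac{w(1-w^q)}{1-w}=w(1-w)^{q-1}$, and differentiating formally (using $q-1\equiv -1\pmod p$) gives $\sum_{k=0}^{q-1}(k+1)w^k=(1-w)^{q-2}$. Substituting $w=\frac z{{\tt x}(1+{\tt x})}$, so that $1-w=\frac{{\tt x}^2+{\tt x}-z}{{\tt x}(1+{\tt x})}$, the desired sum becomes the coefficient of ${\tt x}^{q-1}$ in $\frac{({\tt x}^2+{\tt x}-z)^{q-2}}{(1+{\tt x})^{q-2}}$. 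I would then apply the Frobenius identities $({\tt x}^2+{\tt x}-z)^{q-2}=({\tt x}^{2q}+{\tt x}^q-z)/({\tt x}^2+{\tt x}-z)^2$ (recall $z\in\Bbb F_q$, so $z^q=z$) and $(1+{\tt x})^{q-2}=(1+{\tt x}^q)/(1+{\tt x})^2$, and reduce modulo ${\tt x}^q$ (which preserves the coefficient of ${\tt x}^{q-1}$), to conclude that this coefficient equals the coefficient of ${\tt x}^{q-1}$ in $\dfrac{-z(1+{\tt x})^2}{({\tt x}-r_1)^2({\tt x}-r_2)^2}$.

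Finally I would extract that coefficient by partial fractions. Writing $\frac{-z(1+{\tt x})^2}{({\tt x}-r_1)^2({\tt x}-r_2)^2}=\sum_{i=1}^2\bigl(\frac{a_i}{({\tt x}-r_i)^2}+\frac{b_i}{{\tt x}-r_i}\bigr)$, the coefficient of ${\tt x}^{q-1}$ in $\frac 1{({\tt x}-r)^2}$ is $\frac q{r^{q+1}}=0$, while in $\frac 1{{\tt x}-r}$ it is $-\frac 1{r^q}=-\frac 1r$ (note $r_1,r_2\ne0$ since $r_1r_2=-z\ne0$); so only the $b_i$ contribute. A brief computation — using $r_1+r_2=-1$, hence $(1+r_1)(1+r_2)=r_1r_2=-z$, and $(r_1-r_2)^2=1+4z\ne0$ — gives $b_1=\frac{-2z^2}{(r_1-r_2)^3}$ and $b_2=\frac{2z^2}{(r_1-r_2)^3}$, whence the coefficient equals $-\frac{b_1}{r_1}-\frac{b_2}{r_2}=\frac{-2z^2}{(r_1-r_2)^2r_1r_2}=\frac{-2z^2}{(1+4z)(-z)}=\frac{2z}{1+4z}$, as claimed.

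I expect the main obstacle to be bookkeeping rather than conceptual difficulty: one must keep the Laurent-series manipulations straight and, above all, exploit the relation $q\equiv0\pmod p$ in two different places — it kills the $k=q-1$ term when the sum is extended, and it annihilates the double-pole contributions in the partial-fraction step. Once the set-up is correct, the computation is as routine as those in Lemmas~\ref{L5.1} and \ref{L5.2}.
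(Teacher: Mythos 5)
Your proof is correct and follows essentially the paper's own route: the constant-term representation of $\binom{-k}{k+1}$, the derivative-of-the-geometric-series identity for $\sum(k+1)w^k$, and partial fractions over the roots $r_1,r_2$, with the characteristic-$p$ facts $q\equiv 0\pmod p$ and $r_i^q=r_i$ doing the same work as in Lemmas~\ref{L5.1} and \ref{L5.2}. Your only departures --- writing the truncated series in the closed form $(1-w)^{q-2}$, applying Frobenius to numerator and denominator, and reducing modulo ${\tt x}^q$ so that the double-pole contributions vanish (rather than the paper's bookkeeping via $\binom{-2}{q+1}\equiv 2\pmod p$) --- are streamlinings within the same method, and all the details you state check out.
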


\begin{proof}
We have
\begin{equation}\label{5.3}
\begin{split}
\sum_{0\le k\le\frac {q-1}2}(k+1)\binom{-k}{k+1} z^k\,&=\sum_{0\le k\le q-2}(k+1)\binom{-k}{k+1} z^k\cr
&=\sum_{0\le k\le q-2}(k+1)\cdot \text{ct}\Bigl(\frac 1{{\tt x}^{k+1}(1+{\tt x})^k}\Bigr)\cdot z^k\cr
&=\text{ct}\biggl[\,\frac 1{\tt x}\sum_{0\le k\le q-2}(k+1)\Bigl(\frac z{{\tt x}(1+{\tt x})}\Bigr)^k\,\biggr].
\end{split}
\end{equation}
Since 
\[
\sum_{1\le k\le q-1}k{\tt y}^{k-1}=\frac d{d{\tt y}}\sum_{0\le k\le q-1}{\tt y}^k=\frac d{d{\tt y}}\Bigl(\frac{1-{\tt y}^q}{1-{\tt y}}\Bigr)=\frac{1-{\tt y}^q}{(1-{\tt y})^2},
\]
we have 
\begin{equation}\label{5.4}
\begin{split}
\sum_{0\le k\le q-2}(k+1)\Bigl(\frac z{{\tt x}(1+{\tt x})}\Bigr)^k\,&=\frac{1-(\frac z{{\tt x}(1+{\tt x})})^q}{(1-\frac z{{\tt x}(1+{\tt x})})^2}\cr
&=\Bigl(\frac{{\tt x}(1+{\tt x})}{{\tt x}(1+{\tt x})-z}\Bigr)^2\Bigl(1-\frac z{{\tt x}^q}+\frac z{(1+{\tt x})^q}\Bigr).
\end{split}
\end{equation}
Therefore, by \eqref{5.3} and \eqref{5.4},
\eject
\ \vskip-8mm
\[
\begin{split}
&\sum_{0\le k\le\frac {q-1}2}(k+1)\binom{-k}{k+1} z^k\cr
=\,&\text{ct}\Bigl[\frac 1{\tt x}\Bigl(\frac{{\tt x}(1+{\tt x})}{{\tt x}(1+{\tt x})-z}\Bigr)^2\Bigl(1-\frac z{{\tt x}^q}+\frac z{(1+{\tt x})^q}\Bigr)\Bigr]\cr
=\,&\text{ct}\Bigl[-\frac z{{\tt x}^{q+1}}\Bigl(\frac{{\tt x}(1+{\tt x})}{{\tt x}(1+{\tt x})-z}\Bigr)^2\Bigr]\cr
=\,&\text{ct}\Bigl[-\frac z{{\tt x}^{q+1}}\Bigl(1+\frac z{{\tt x}(1+{\tt x})-z}\Bigr)^2\Bigr]\cr
=\,&\text{ct}\Bigl[-\frac z{{\tt x}^{q+1}}\Bigl(1+\frac z{r_1-r_2}\Bigl(\frac 1{{\tt x}-r_1}-\frac 1{{\tt x}-r_2}\Bigr)\Bigr)^2\Bigr]\cr
&\kern 3.5cm ({\tt x}^2+{\tt x}-z=({\tt x}-r_1)({\tt x}-r_2),\ r_1,r_2\in\Bbb F_q,\ r_1\ne r_2)\cr
=\,&\text{ct}\Bigl[-\frac z{{\tt x}^{q+1}}\Bigl(\frac{2z}{r_1-r_2}\Bigl(\frac 1{{\tt x}-r_1}-\frac 1{{\tt x}-r_2}\Bigr)\cr
&+\frac {z^2}{(r_1-r_2)^2}\Bigl(\frac 1{({\tt x}-r_1)^2}+\frac 1{({\tt x}-r_2)^2}-\frac 2{({\tt x}-r_1)({\tt x}-r_2)}\Bigr)\Bigr)\Bigr]\cr
=\,&\text{ct}\Bigl[-\frac {z^2}{{\tt x}^{q+1}}\Bigl(\frac 2{r_1-r_2}\Bigl(\frac 1{{\tt x}-r_1}-\frac 1{{\tt x}-r_2}\Bigr)+\frac z{(r_1-r_2)^2}\Bigl(\frac 1{({\tt x}-r_1)^2}-\frac 1{({\tt x}-r_2)^2}\Bigr)\cr
&-\frac {2z}{(r_1-r_2)^2}\cdot \frac 1{r_1-r_2}\Bigl(\frac 1{{\tt x}-r_1}-\frac 1{{\tt x}-r_2}\Bigr)\Bigr)\Bigr]\cr
=\,&\text{ct}\Bigl[-\frac {z^2}{{\tt x}^{q+1}}\Bigl(\Bigl(\frac 2{r_1-r_2}-\frac {2z}{(r_1-r_2)^3}\Bigr)\Bigl(-\frac 1{r_1}\cdot\frac 1{1-\frac{\tt x}{r_1}}+\frac 1{r_2}\cdot\frac 1{1-\frac{\tt x}{r_2}}\Bigr)\cr
&+\frac z{(r_1-r_2)^2}\Bigl(\frac 1{r_1^2}\cdot\frac 1{(1-\frac{\tt x}{r_1})^2}+\frac 1{r_2^2}\cdot\frac 1{(1-\frac{\tt x}{r_2})^2}\Bigr)\Bigr)\Bigr]\cr
=\,&-z^2\biggl[\Bigl(\frac 2{r_1-r_2}-\frac {2z}{(r_1-r_2)^3}\Bigr)\Bigl(-\frac 1{r_1}\Bigl(\frac 1{r_1}\Bigr)^{q+1}+\frac 1{r_2}\Bigl(\frac 1{r_2}\Bigr)^{q+1}\Bigr)\cr
&+\frac z{(r_1-r_2)^2}\biggl(\frac 1{r_1^2}\binom{-2}{q+1}\Bigl(-\frac 1{r_1}\Bigr)^{q+1}+\frac 1{r_2^2}\binom{-2}{q+1}\Bigl(-\frac 1{r_2}\Bigr)^{q+1}\biggr)\biggr]\cr
=\,&-z^2\Bigl[\Bigl(\frac 2{r_1-r_2}-\frac {2z}{(r_1-r_2)^3}\Bigr)\Bigl(-\frac 1{r_1^3}+\frac 1{r_2^3}\Bigr)+\frac z{(r_1-r_2)^2}\Bigl(\frac 2{r_1^4}+\frac 2{r_2^4}\Bigr)\Bigr]\cr
&\kern 3.5cm \textstyle (\binom{-2}{q+1}\equiv 2\pmod p)\cr
=\,&-2z^2\Bigl[\Bigl(\frac 1{r_1-r_2}-\frac z{(r_1-r_2)^3}\Bigr)\frac{r_1^3-r_2^3}{(r_1r_2)^3}+\frac z{(r_1-r_2)^2}\cdot\frac{r_1^4+r_2^4}{(r_1r_2)^4}\Bigr]\cr
=\,&-2z^2\Bigl[\Bigl(1-\frac z{(r_1-r_2)^2}\Bigr)\frac{r_1^2+r_1r_2+r_2^2}{-z^3}+\frac 1{z^3}\cdot\frac{r_1^4+r_2^4}{(r_1-r_2)^2}\Bigr]\cr
=\,&-\frac 2z\Bigl[\Bigl(\frac z{(r_1-r_2)^2}-1\Bigr)(1+z)+\frac{(r_1^2-r_2^2)^2+2r_1^2r_2^2}{(r_1-r_2)^2}\Bigr]\cr
=\,&-\frac 2z\Bigl[\Bigl(\frac z{(r_1-r_2)^2}-1\Bigr)(1+z)+(r_1+r_2)^2+\frac{2z^2}{(r_1-r_2)^2}\Bigr]\cr
=\,&-\frac 2z\Bigl[\Bigl(\frac z{1+4z}-1\Bigr)(1+z)+1+\frac{2z^2}{1+4z}\Bigr]
=\frac{2z}{1+4z}.
\end{split}
\]
\end{proof}

\begin{proof}[Proof of Theorem~\ref{T1.1}] Recall again that in Theorem~\ref{T1.1}, $q$ is odd and $t\in\Bbb F_q^*$.

\medskip
($\Rightarrow$)
$1^\circ$ Let $\alpha=0$ and $\beta=q-1$ in \eqref{4.1}. We have
\[
\begin{split}
0\,&=\sum_{x\in\Bbb F_{q^2}}f(x)^{(q-1)q}\cr
&=-\sum_{\substack{j,k\ge 0\cr 1+j-2k=0,\,q+1}}\binom{q-1}j\binom jk(-1)^{k+j}t^{-j}\cr
&=-\sum_{\substack{j,k\ge 0\cr 0\le j\le q-1\cr 1+j-2k=0}}\binom jk(-1)^kt^{-j}\kern 2.8cm \textstyle(\binom{q-1}j\equiv(-1)^j\pmod p)\cr 
&=-\sum_{1\le k\le\frac q2}\binom{2k-1}k(-1)^kt^{1-2k}\cr
&=-\sum_{1\le k\le\frac q2}\binom{-k}kt^{1-2k}\cr
&=-t\biggl(\sum_{0\le k\le\frac q2}\binom{-k}k t^{-2k}-1\biggr).
\end{split}
\]
Thus
\begin{equation}\label{5.1}
\sum_{0\le k\le\frac q2}\binom{-k}k t^{-2k}=1.
\end{equation}
Put $z=t^{-2}$. Then by \eqref{5.1} and Lemma~\ref{L5.1}, ${\tt x}^2-{\tt x}-z$ has two distinct roots in $\Bbb F_q$, i.e., $1+4z$ is a square in $\Bbb F_q^*$.

\medskip

$2^\circ$ Let $\alpha=1$ and $\beta=q-2$ in \eqref{4.1}. We have
\[
\kern-4cm
\begin{split}
0
=\,&\sum_{x\in\Bbb F_{q^2}}f(x)^{1+(q-2)q}\cr
=\,&-\sum_{\substack{j,k\ge 0\cr 2+j-2k=0,\,q+1}}\binom{q-2}j\binom jk(-1)^{k+j}t^{-j}\cr
&-\sum_{\substack{j,k\ge 0\cr 3+j-2k=0,\,q+1}}\binom{q-2}j\binom {1+j}k(-1)^{k+j}t^{-1-j}\cr
=\,&-\sum_{1\le k\le \frac q2}\binom{q-2}{2k-2}\binom{2k-2}k(-1)^kt^{2-2k}\cr
&+\sum_{2\le k\le \frac {q+1}2}\binom{q-2}{2k-3}\binom{2k-2}k(-1)^kt^{2-2k}+1\cr
\end{split}
\]
\[
\begin{split}
=\,&-\sum_{1\le k\le \frac q2}(2k-1)\binom{-k+1}kt^{2-2k}-\sum_{2\le k\le \frac {q+1}2}(2k-2)\binom{-k+1}kt^{2-2k}+1\cr
&\kern 4.5cm \textstyle(\binom{q-2}{2k-2}\equiv 2k-1,\ \binom{q-2}{2k-3}\equiv-(2k-2)\pmod p)\cr
=\,&-\sum_{1\le k\le \frac {q+1}2}(4k-3)\binom{-k+1}kt^{2-2k}+1\cr
=\,&-\sum_{0\le k\le \frac {q-1}2}(4k+1)\binom{-k}{k+1}t^{-2k}+1\cr
=\,&-4\sum_{0\le k\le \frac {q-1}2}(k+1)\binom{-k}{k+1}z^k+3\sum_{0\le k\le \frac {q-1}2}(k+1)\binom{-k}{k+1}z^k+1\kern 0.6cm (z=t^{-2})\cr
=\,&-4\cdot \frac{2z}{1+4z}+3\cdot 1+1 \kern 1cm \text{(by Lemmas~\ref{L5.2}, \ref{L5.3})}\cr
=\,&4\cdot \frac{1+2z}{1+4z}.
\end{split}
\]
So we have $z=-\frac 12$, i.e., $t^2=-2$. By $1^\circ$, $1+4z=-1$ is a square in $\Bbb F_q^*$. Since both $-1$ and $-2$ are squares in $\Bbb F_q^*$, we have $q\equiv 1\pmod 8$.
\end{proof}


\section{The Polynomial $g_{n,q}$}

The polynomial $f=-{\tt x}+t{\tt x}^q+{\tt x}^{2q-1}$ considered in the present paper is closely related to another class of polynomials which we will discuss briefly in this section.

For each prime power $q$ and integer $n\ge 0$, the functional equation 
\[
\sum_{a\in\Bbb F_q}({\tt x}+a)^n=g_{n,q}({\tt x}^q-{\tt x})
\]
defines a polynomial $g_{n,q}\in\Bbb F_p[{\tt x}]$, where $p=\text{char}\,\Bbb F_q$. The polynomial $g_{n,q}$ was introduced in \cite{Hou11} as the $q$-ary version of the {\em reversed Dickson polynomial}. ($g_{n,2}$ is the reversed Dickson polynomial in characteristic $2$.) The rationale and incentive for studying the class $g_{n,q}$ are the fact that the class contains many interesting new PPs; see \cite{FHL,Hou12}. We are interested in triples of integers $(n,e;q)$ for which $g_{n,q}$ is a PP of $\Bbb F_{q^e}$, and we call such triples {\em desirable}. Among the known desirable triples $(n,e;q)$, $n$ frequently appears in the form $n=q^a-q^b-1$, $0<b<a<pe$.

Let us consider triples of the form 
\begin{equation}\label{6.1}
(q^a-q^b-1,e;q),\qquad e\ge 2,\ 0<b<a<pe.
\end{equation} 
(We assume $e\ge 2$ since all desirable triples with $e=1$ have been determined \cite[Corollary~ 2.2]{FHL}.) It is known that if $(a,b)=(2,1)$ and $\text{gcd}(q-2,q^e-1)=1$, or if $a\equiv b\equiv 0\pmod e$, then the triple \eqref{6.1} is desirable. It is also conjectured that the converse of the above statement is true for $e\ge 3$. When $e=2$, the situation becomes very interesting and complicated \cite[Section 5 and Table 1]{FHL}. It is known that for $q>2$ and $i>0$,
\[
g_{q^{2i}-q-1,q}({\tt x})\equiv (i-1){\tt x}^{q^2-q-1}-i{\tt x}^{q-2}\pmod{{\tt x}^{q^2}-{\tt x}}.
\]
Note that $g_{q^{2i}-q-1,q}({\tt x}^{q^2-q-1})\equiv(i-1){\tt x}-i{\tt x}^{2q-1}\pmod{{\tt x}^{q^2}-{\tt x}}$, where ${\tt x}^{q^2-q-1}$ is a PP of $\Bbb F_{q^2}$. So $g_{q^{2i}-q-1,q}$ is a PP of $\Bbb F_{q^2}$ if and only if $(i-1){\tt x}-i{\tt x}^{2q-1}$ is.
The attempt to determine the desirable triples of the form $(q^{2i}-q-1,2;q)$ has led to a more general result: In \cite{Hou}, all PPs of $\Bbb F_{q^2}$ of the form $t{\tt x}+{\tt x}^{2q-1}$, $t\in\Bbb F_q^*$, have been determined. For $q>2$ and $i>0$, we also have
\[
g_{q^{2i+1}-q-1,q}({\tt x})\equiv -{\tt x}^{q^2-2}+i{\tt x}^{q^2-q-1}-i{\tt x}^{q-2}\pmod{{\tt x}^{q^2}-{\tt x}}.
\]
Note that $g_{q^{2i+1}-q-1,q}({\tt x}^{q^2-q-1})\equiv i{\tt x}-{\tt x}^q-i{\tt x}^{2q-1}\pmod{{\tt x}^{q^2}-{\tt x}}$,
which is of course the prototype of the polynomial $f$ considered in the present paper. The following corollary immediately follows from Theorem~\ref{T1.1}. (Note that $i^2=-\frac 12$ has a solution in $\Bbb F_p$ if and only if $p\equiv 1$ or $3\pmod 8$.)

\begin{cor}\label{C6.1}
Let $q$ be odd, $i>0$, and $i\not\equiv 0\pmod p$. Then $(q^{2i+1}-q-1,2;q)$ is desirable if and only if $p\equiv 1$ or $3\pmod 8$, $q\equiv 1\pmod 8$, and $i^2=-\frac 12$.
\end{cor}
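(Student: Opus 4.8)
The plan is to derive Corollary~\ref{C6.1} directly from Theorem~\ref{T1.1} by tracking the normalization of the polynomial $g_{q^{2i+1}-q-1,q}$ modulo ${\tt x}^{q^2}-{\tt x}$. We are given the congruence
\[
g_{q^{2i+1}-q-1,q}({\tt x})\equiv -{\tt x}^{q^2-2}+i{\tt x}^{q^2-q-1}-i{\tt x}^{q-2}\pmod{{\tt x}^{q^2}-{\tt x}},
\]
so $g_{q^{2i+1}-q-1,q}$ is a PP of $\Bbb F_{q^2}$ if and only if the right-hand side is. First I would observe that precomposing with the PP ${\tt x}^{q^2-q-1}$ of $\Bbb F_{q^2}$ yields $g_{q^{2i+1}-q-1,q}({\tt x}^{q^2-q-1})\equiv i{\tt x}-{\tt x}^q-i{\tt x}^{2q-1}\pmod{{\tt x}^{q^2}-{\tt x}}$, as already noted in the text. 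So the desirability of $(q^{2i+1}-q-1,2;q)$ is equivalent to $i{\tt x}-{\tt x}^q-i{\tt x}^{2q-1}$ being a PP of $\Bbb F_{q^2}$.

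Next I would reduce this polynomial to the exact shape covered by Theorem~\ref{T1.1}. Since $i\not\equiv 0\pmod p$, we have $i\in\Bbb F_q^*$, and multiplying a PP by a nonzero constant preserves the permutation property, so $i{\tt x}-{\tt x}^q-i{\tt x}^{2q-1}$ is a PP if and only if $-{\tt x}+\frac 1i{\tt x}^q+{\tt x}^{2q-1}$ is (dividing by $-i$). This is precisely $f=-{\tt x}+t{\tt x}^q+{\tt x}^{2q-1}$ with $t=\frac 1i\in\Bbb F_q^*$. By Theorem~\ref{T1.1}, $f$ is a PP of $\Bbb F_{q^2}$ if and only if $q\equiv 1\pmod 8$ and $t^2=-2$, i.e.\ $\frac 1{i^2}=-2$, equivalently $i^2=-\frac 12$. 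Finally, the existence of a solution $i\in\Bbb F_p$ (equivalently $i\in\Bbb F_q$, since $i$ is an integer reduced mod $p$) to $i^2=-\frac 12$ requires $-\frac 12$, hence $-2$, to be a square in $\Bbb F_p^*$; together with $q\equiv 1\pmod 8$ this forces and is consistent with $p\equiv 1$ or $3\pmod 8$, which I would verify by recalling that $\big(\tfrac{-2}{p}\big)=1$ exactly for $p\equiv 1,3\pmod 8$.

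Assembling these equivalences gives: $(q^{2i+1}-q-1,2;q)$ is desirable $\iff$ $q\equiv 1\pmod 8$ and $i^2=-\frac 12$ in $\Bbb F_q$ $\iff$ $q\equiv 1\pmod 8$, $p\equiv 1$ or $3\pmod 8$, and $i^2=-\frac 12$. The only subtlety — and the step I would be most careful about — is the bookkeeping between $\Bbb F_p$ and $\Bbb F_q$: one must check that $t=\frac 1i$ genuinely lies in $\Bbb F_q^*$ (which holds because $i$ is an integer with $i\not\equiv 0\pmod p$, so its image in $\Bbb F_q$ is a nonzero element of the prime subfield) and that the condition $t^2=-2$ over $\Bbb F_q$ is equivalent to $i^2=-\frac12$ having a solution in the prime field $\Bbb F_p$, which is where the $p\equiv 1,3\pmod 8$ clause enters. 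There is no real obstacle here; the corollary is a routine translation of Theorem~\ref{T1.1} once the polynomial identities and the normalization by the unit $-i$ are in place.
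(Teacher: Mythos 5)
Your proof is correct and follows essentially the same route as the paper: the paper also derives Corollary~\ref{C6.1} by composing with the PP ${\tt x}^{q^2-q-1}$, normalizing $i{\tt x}-{\tt x}^q-i{\tt x}^{2q-1}$ by the unit $-i$ to reach $f$ with $t=\frac1i$, invoking Theorem~\ref{T1.1}, and observing that $i^2=-\frac12$ in the prime field is possible exactly when $p\equiv 1$ or $3\pmod 8$. You have merely made explicit the bookkeeping that the paper leaves as "immediately follows."
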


The corresponding corollary of Theorem~\ref{T1.2} is already known \cite[Theorem~5.9 (ii)]{FHL}.


\section{Some Congruence Questions}

Assume $q\equiv 1\pmod 8$ and $t\in\Bbb F_q^*$, $t^2=-2$. Then by Theorem~\ref{T1.1} and \eqref{4.1}, we have
\begin{equation}\label{7.1}
\sum_{\substack{i,j,k\ge 0\cr \alpha+1+i+j-2k=0,\,q+1}}\binom\alpha i\binom\beta j\binom{i+j}k(-1)^{k+j}t^{-(i+j)}=0
\end{equation}
for all integers $\alpha,\beta\ge 0$ with $\alpha+\beta=q-1$. In the left side of \eqref{6.1}, let 
\[
I_1=\sum_{\substack{i,j,k\ge 0\cr \alpha+1+i+j-2k=0}},\qquad I_2=\sum_{\substack{i,j,k\ge 0\cr \alpha+1+i+j-2k=q+1}}.
\]
We try to express $I_1$ in terms of $\beta$ only and $I_2$ in terms of $\alpha$ only. First, we have
\begin{equation}\label{7.2}
\begin{split}
I_1\,&=\sum_{\alpha+1\le k\le\frac{\alpha+q}2}\sum_{\substack{0\le i\le\alpha\cr 0\le j\le \beta\cr i+j=2k-\alpha-1}}\binom\alpha i\binom\beta j\binom{2k-\alpha-1}k(-1)^{k+j}t^{\alpha+1-2k}\cr
&=\sum_{\alpha+1\le k\le\frac{\alpha+q}2}\sum_{\substack{0\le i\le\alpha\cr 0\le j\le \beta\cr i+j=-2k+\alpha+q}}\binom\alpha i\binom\beta j\binom{2k-\alpha-1}k(-1)^{k+\beta+j}t^{\alpha+1-2k}\cr
&\kern 2.5cm (i\mapsto \alpha-i,\ j\mapsto \beta-j)\cr
&=\sum_{\alpha+1\le k\le\frac{\alpha+q}2}\sum_{\substack{i,j\ge 0\cr i+j=-2k+\alpha+q}}
\binom{\beta+i}\beta\binom\beta j\binom{2k-\alpha-1}{k-\alpha-1}(-1)^{k+1}t^{\alpha+1-2k}\cr
&\kern 2.5cm \textstyle (\binom\alpha i=(-1)^i\binom{-\alpha-1+i}i\equiv(-1)^i\binom{\beta+i}i=(-1)^i\binom{\beta+i}\beta\pmod p)\cr
&=\sum_{0\le k\le\frac{\beta-1}2}\sum_{\substack{i,j\ge 0\cr i+j=\beta-2k-1}}
\binom{\beta+i}\beta\binom\beta j\binom{2k-\beta}k(-1)^{k+\alpha}t^{-\alpha-1-2k}\cr
&\kern 2.5cm (k\mapsto k+\alpha+1)\cr
&=\sum_{0\le k\le\frac{\beta-1}2}\,\sum_{0\le j\le \beta-2k-1}\binom{2\beta-2k-j-1}\beta\binom\beta j\binom{2k-\beta}k(-1)^{k+\beta}t^{\beta-1-2k}.
\end{split}
\end{equation}
We also have
\begin{equation}\label{7.3}
\begin{split}
I_2\,&=\sum_{0\le k\le\frac{\alpha-1}2}\sum_{\substack{0\le i\le\alpha\cr 0\le j\le \beta\cr i+j=2k-\alpha+q}}\binom\alpha i\binom\beta j\binom{2k-\alpha}k(-1)^{k+j}t^{\alpha-1-2k}\cr
&=\sum_{0\le k\le\frac{\alpha-1}2}\sum_{\substack{0\le i\le\alpha\cr 0\le j\le \beta\cr i+j=-2k+\alpha-1}}\binom\alpha i\binom\beta j\binom{2k-\alpha}k(-1)^{k+\beta+j}t^{\alpha-1-2k}\cr
&\kern6.8cm (i\mapsto \alpha-i,\ j\mapsto\beta-j)\cr
&=\sum_{0\le k\le\frac{\alpha-1}2}\sum_{\substack{i,j\ge 0\cr i+j=-2k+\alpha-1}}\binom\alpha i\binom{\alpha+j}\alpha\binom{2k-\alpha}k(-1)^{k+\beta}t^{\alpha-1-2k}\cr
&\kern 6.8cm \textstyle (\binom\beta j\equiv(-1)^j\binom{\alpha+j}\alpha\pmod p)\cr
&=\sum_{0\le k\le\frac{\alpha-1}2}\,\sum_{0\le i\le\alpha-2k-1}\binom\alpha i\binom{2\alpha-2k-i-1}\alpha\binom{2k-\alpha}k(-1)^{k+\alpha}t^{\alpha-1-2k}.
\end{split}
\end{equation}
For integer $a\ge 0$, define
\begin{equation}\label{7.4}
H(a)=(-2)^{\lfloor\frac{a-1}2\rfloor}\sum_{0\le k\le\frac{a-1}2}\,\sum_{0\le i\le a-2k-1}\binom ai\binom{2k-a}k\binom{2a-2k-i-1}a2^{-k}.
\end{equation}
Then by \eqref{7.2},
\begin{equation}\label{7.5}
I_1=(-1)^\beta t^{\beta-1}(-2)^{-\lfloor\frac{\beta-1}2\rfloor}H(\beta)=(-1)^\beta t^{2\{\frac{\beta-1}2\}}H(\beta),
\end{equation}
where $\{x\}=x-\lfloor x\rfloor$. By \eqref{7.3}, in the same way,
\begin{equation}\label{7.6}
I_2=(-1)^\alpha t^{2\{\frac{\alpha-1}2\}}H(\alpha).
\end{equation}
Therefore we may restate \eqref{7.1} as the following corollary.

\begin{cor}\label{C7.1}
Assume that $q$ is a power of a prime $p$ with $q\equiv 1\pmod 8$. Then for integers $\alpha,\beta\ge 0$ with $\alpha+\beta=q-1$, we have
\begin{equation}\label{7.7}
H(\alpha)+H(\beta)\equiv 0\pmod p.
\end{equation}
\end{cor}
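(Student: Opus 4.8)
The plan is to deduce \eqref{7.7} from the vanishing of the power sums of a permutation polynomial, via the computation of Section~4. Since $q\equiv 1\pmod 8$, both $-1$ and $2$ are squares in $\Bbb F_q^*$, hence so is $-2$; fix $t\in\Bbb F_q^*$ with $t^2=-2$. By Theorem~\ref{T1.1}, $f=-{\tt x}+t{\tt x}^q+{\tt x}^{2q-1}$ is then a PP of $\Bbb F_{q^2}$, so $\sum_{x\in\Bbb F_{q^2}}f(x)^s=\sum_{y\in\Bbb F_{q^2}}y^s=0$ for every integer $s$ with $1\le s\le q^2-2$. Taking $s=\alpha+\beta q$ with $\alpha,\beta\ge 0$ and $\alpha+\beta=q-1$ (so that $0<s<q^2-1$), formula \eqref{4.1} converts this vanishing into \eqref{7.1}. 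Thus \eqref{7.7} is equivalent to \eqref{7.1}, and all that remains is to rewrite the double sum there in terms of $H$.

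For that, split the sum in \eqref{7.1} as $I_1+I_2$, where $I_1$ gathers the terms with $\alpha+1+i+j-2k=0$ and $I_2$ those with $\alpha+1+i+j-2k=q+1$. In $I_1$ I would apply the substitutions $i\mapsto\alpha-i$, $j\mapsto\beta-j$, then $k\mapsto k+\alpha+1$, using the mod-$p$ congruence $\binom\alpha i\equiv(-1)^i\binom{\beta+i}\beta\pmod p$ (valid because $\alpha+\beta=q-1$ and $q$ is a power of $p$) together with $\binom{2k-\alpha-1}k=\binom{2k-\alpha-1}{k-\alpha-1}$; this reduces the indices to $\beta$ alone and is exactly \eqref{7.2}. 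Using $t^2=-2$ to turn the surviving power of $t$ into a power of $-2$, so that it absorbs the factor $(-2)^{\lfloor(\beta-1)/2\rfloor}$ in the definition \eqref{7.4} of $H$, one reaches \eqref{7.5}: $I_1=(-1)^\beta t^{2\{(\beta-1)/2\}}H(\beta)$. The treatment of $I_2$ is completely parallel — substitute $i\mapsto\alpha-i$, $j\mapsto\beta-j$ and use $\binom\beta j\equiv(-1)^j\binom{\alpha+j}\alpha\pmod p$ — giving \eqref{7.3} and hence \eqref{7.6}: $I_2=(-1)^\alpha t^{2\{(\alpha-1)/2\}}H(\alpha)$.

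To finish: since $q$ is odd, $\alpha+\beta=q-1$ is even, so $\alpha$ and $\beta$ have the same parity; hence $(-1)^\alpha=(-1)^\beta$ and $\{(\alpha-1)/2\}=\{(\beta-1)/2\}=:\delta\in\{0,\tfrac 12\}$. Therefore \eqref{7.1} reads $(-1)^\alpha t^{2\delta}\bigl(H(\alpha)+H(\beta)\bigr)=0$ in $\Bbb F_q$; as $(-1)^\alpha t^{2\delta}\in\Bbb F_q^*$, this forces $H(\alpha)+H(\beta)=0$ in $\Bbb F_q$. Since $H(\alpha)+H(\beta)$ is the image in $\Bbb F_q$ of an integer, this is precisely \eqref{7.7}.

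The only conceptual input is Theorem~\ref{T1.1} together with the standard vanishing of power sums over $\Bbb F_{q^2}$. The real work, and the point where the argument could slip, is in the two reindexings \eqref{7.2} and \eqref{7.3}: one must keep the inner and outer summation ranges consistent after each index shift, track the sign factors $(-1)^i$, $(-1)^j$, $(-1)^k$ and the exponent of $t$, and invoke the Lucas-type binomial congruences at exactly the right places so that the summand ends up matching the definition \eqref{7.4} of $H$.
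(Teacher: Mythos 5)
Your proposal is correct and follows essentially the same route as the paper: choose $t\in\Bbb F_q^*$ with $t^2=-2$ (possible since $q\equiv 1\pmod 8$), invoke Theorem~\ref{T1.1} and the vanishing of power sums to get \eqref{7.1} via \eqref{4.1}, and then carry out exactly the reindexings \eqref{7.2}--\eqref{7.3} leading to \eqref{7.5}--\eqref{7.6}. Your explicit final step (same parity of $\alpha$ and $\beta$, so the common nonzero factor $(-1)^\alpha t^{2\{(\alpha-1)/2\}}$ cancels) is the argument the paper leaves implicit, and it is correct.
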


\noindent{\bf Question.} Is there a more direct number theoretic proof for \eqref{7.7}?

\medskip

It is possible to write the function $H(a)$ as an unrestricted double sum. We have
\begin{equation}\label{7.8}
\begin{split}
H(a)\,&=(-2)^{\lfloor\frac{a-1}2\rfloor}\sum_{0\le k\le\frac{a-1}2}\sum_{i\ge 0}\binom ai\binom{2k-a}k\binom{2a-2k-i-1}a2^{-k}\cr
&\kern 1.2cm \textstyle (\binom{2a-2k-i-1}a=0\ \text{when $0\le k\le\frac{a-1}2$ and $a-2k-1<i\le a$})\cr
&=(-2)^{\lfloor\frac{a-1}2\rfloor}\sum_{0\le k\le\frac{a-1}2}\sum_{i\ge 0}\binom ai\binom{a-1-k}k\binom{2a-2k-i-1}a(-2)^{-k}\cr
&=(-2)^{\lfloor\frac{a-1}2\rfloor}\sum_{0\le k\le\frac{a-1}2}\sum_{i\ge 0}\binom ai\binom{a-1-k}{a-1-2k}\binom{a-1-2k+i}a(-2)^{-k}\cr
&\kern 1.2cm (i\mapsto a-i)\cr
&=(-2)^{\lfloor\frac{a-1}2\rfloor}\sum_{\substack{0\le l\le a-1\cr l\equiv a-1\,\text{(mod 2)}}}\sum_{i\ge 0}\binom ai\binom{\frac{a-1+l}2}l\binom{l+i}a(-2)^{-\frac 12(a-1-l)}\cr
&\kern 1.2cm (l=a-1-2k)\cr
&=(-2)^{\lfloor\frac{a-1}2\rfloor}\sum_{\substack{l\ge 0\cr l\equiv a-1\,\text{(mod 2)}}}\sum_{i\ge 0}\binom ai\binom{\frac{a-1+l}2}l\binom{l+i}a(-2)^{-\frac 12(a-1-l)}\cr
&\kern 1.2cm \textstyle (\binom{\frac{a-1+l}2}l=0\ \text{when $l>a-1$ and $l\equiv a-1\pmod 2$}) \cr
&=\sum_{j,i\ge 0}\binom ai \binom{\lfloor\frac a2\rfloor+j}{2j+2\{\frac{a-1}2\}} \binom{2j+2\{\frac{a-1}2\}+i}a (-2)^j\cr
&\kern 1.2cm \textstyle (l=2j+2\{\frac{a-1}2\}).
\end{split}
\end{equation}

\medskip

\noindent{\bf Remark.} Each of the sequences $H(2n)$ and $H(2n+1)$ satisfies a recurrence relation of order $4$. See the appendix for the details.

\medskip

We now discuss another strange phenomenon associated with $H(a)$. In the final expression of \eqref{7.8}, make a change of variable $i\mapsto a-i$. Then we have
\begin{equation}\label{7.9}
\begin{split}
H(a)\,&=\sum_{j,i\ge 0}\binom ai \binom{\lfloor\frac a2\rfloor+j}{2j+2\{\frac{a-1}2\}} \binom{a+2j+2\{\frac{a-1}2\}-i}a (-2)^j\cr
&=\sum_{0\le j\le\frac{a-1}2}\;\sum_{0\le i\le 2j+2\{\frac{a-1}2\}}\binom ai \binom{\lfloor\frac a2\rfloor+j}{2j+2\{\frac{a-1}2\}} \binom{a+2j+2\{\frac{a-1}2\}-i}{2j+2\{\frac{a-1}2\}-i} (-2)^j.
\end{split}
\end{equation}
Assume $q$ is odd. For $0\le a\le q-1$, we obtain below an expression for $H(q-1-a)$ in $\Bbb Z_p/p\Bbb Z_p$ ($=\Bbb F_p$) that is almost identical to \eqref{7.9}. We have 
\eject
\ \vskip-5mm
\begin{equation}\label{7.10}
\begin{split}
&H(q-1-a)\cr
=\,&\sum_{0\le j\le\frac{q-a-2}2}\;\sum_{0\le i\le 2j+2\{\frac{a-1}2\}}\binom{q-1-a}i\binom{\lfloor\frac{q-1-a}2\rfloor+j}{2j+2\{\frac{a-1}2\}}\binom{q-1-a+2j+2\{\frac{a-1}2\}-i}{2j+2\{\frac{a-1}2\}-i}(-2)^j\cr
&\kern 8cm \text{(by \eqref{7.9})}\cr
=\,&\sum_{0\le j\le\frac{q-a-2}2}\;\sum_{0\le i\le 2j+2\{\frac{a-1}2\}}\binom{-1-a}i\binom{-\lceil\frac a2\rceil-\frac 12+j}{2j+2\{\frac{a-1}2\}}\binom{-1-a+2j+2\{\frac{a-1}2\}-i}{2j+2\{\frac{a-1}2\}-i}(-2)^j\cr
=\,&\sum_{0\le j\le\frac{q-a-2}2}\;\sum_{0\le i\le 2j+2\{\frac{a-1}2\}}(-1)^i\binom{a+i}i(-1)^{a-1}\binom{\lceil\frac a2\rceil +2\{\frac{a-1}2\}+j-\frac 12}{2j+2\{\frac{a-1}2\}}\cr
&\cdot (-1)^{a-1-i}\binom a{2j+2\{\frac{a-1}2\}-i}(-2)^j\cr
=\,&\sum_{0\le j\le\frac{q-a-2}2}\;\sum_{0\le i'\le 2j+2\{\frac{a-1}2\}}\binom a{i'}\binom{\lfloor\frac a2\rfloor +\frac 12+j}{2j+2\{\frac{a-1}2\}}\binom{a+2j+2\{\frac{a-1}2\}-i'}{2j+2\{\frac{a-1}2\}-i'}(-2)^j\cr
&\kern 8cm \textstyle (i'=2j+2\{\frac{a-1}2\}-i).
\end{split}
\end{equation}

We claim that when $\frac{q-1}2\le a\le q-1$, in the last expression of \eqref{7.10}, the sum $\sum_{0\le j\le\frac{q-a-2}2}$ may be replaced with $\sum_{0\le j\le\frac{a-1}2}$. In fact, we have $\frac{a-1}2\ge\frac{q-a-2}2$. For $\frac{q-a-2}2<j\le \frac{a-1}2$, we have 
\[
\begin{split}
\binom{\lfloor\frac a2\rfloor +\frac 12+j}{2j+2\{\frac{a-1}2\}}\,&=(-1)^{a-1} \binom{-\lfloor\frac a2\rfloor +2\{\frac{a-1}2\}-\frac 32+j}{2j+2\{\frac{a-1}2\}}\cr
&=(-1)^{a-1} \binom{-\frac 12-\lceil\frac a2\rceil+j}{2j+2\{\frac{a-1}2\}}\cr
&\equiv (-1)^{a-1} \binom{\frac {q-1}2-\lceil\frac a2\rceil+j}{2j+2\{\frac{a-1}2\}}\pmod p.
\end{split}
\]
Since $\frac{q-1}2-\lceil\frac a2\rceil+j\in\Bbb Z$ and $0\le\frac{q-1}2-\lceil\frac a2\rceil+j<2j+2\{\frac{a-1}2\}$, the above binomial coefficient is $0$. Hence the claim is proved.

By \eqref{7.9}, \eqref{7.10}, and the above claim, for $\frac{q-1}2\le a\le q-1$, we have
\begin{equation}\label{7.11}
\begin{split}
H(a)+H(q-1-a)
=\,&\sum_{0\le j\le\frac{a-1}2}\;\sum_{0\le i\le 2j+2\{\frac{a-1}2\}}(-2)^j\binom ai\binom{a+2j+2\{\frac{a-1}2\}-i}{2j+2\{\frac{a-1}2\}-i}\cr
&\cdot\biggl[\binom{\lfloor\frac a2\rfloor+j}{2j+2\{\frac{a-1}2\}}+\binom{\lfloor\frac a2\rfloor+\frac 12+j}{2j+2\{\frac{a-1}2\}}\biggr]
\end{split}
\end{equation}
in $\Bbb Z_p/p\Bbb Z_p$.

Let $S(a)$ denote the right side of \eqref{7.11} for each integer $a\ge 0$. Hence $S$ is a function from the set of nonnegative integers to $\Bbb Q$. It is interesting to observe that $S(a)$ is independent of $q$ and that Corollary~\ref{C7.1} has the following equivalent form.

\begin{cor}\label{C7.2}
Assume $q\equiv 1\pmod 8$. Then
\begin{equation}\label{7.12}
S(a)\equiv 0\pmod p\qquad \text{for}\ \frac {q-1}2\le a\le q-1.
\end{equation}
\end{cor}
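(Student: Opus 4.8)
The plan is to derive Corollary~\ref{C7.2} directly from Corollary~\ref{C7.1} by unwinding the definitions that connect $H$ and $S$. The key point is that the entire chain of computations in \eqref{7.8}--\eqref{7.11} is already in place: equation \eqref{7.9} rewrites $H(a)$ as a double sum, equation \eqref{7.10} produces a companion expression for $H(q-1-a)$ valid in $\Bbb F_p$, and the claim proved just after \eqref{7.10} shows that for $\frac{q-1}2\le a\le q-1$ the range of the outer summation index $j$ in that companion expression can be harmlessly extended from $0\le j\le\frac{q-a-2}2$ to $0\le j\le\frac{a-1}2$, because the extra terms carry a vanishing binomial coefficient modulo $p$. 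Combining \eqref{7.9} with the extended form of \eqref{7.10} gives exactly \eqref{7.11}, i.e. $H(a)+H(q-1-a)\equiv S(a)\pmod p$ for $\frac{q-1}2\le a\le q-1$.

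The proof then proceeds in two short steps. First I would observe that, writing $\alpha=a$ and $\beta=q-1-a$, the hypothesis $\frac{q-1}2\le a\le q-1$ guarantees $0\le\beta\le\frac{q-1}2\le\alpha\le q-1$ with $\alpha+\beta=q-1$, so Corollary~\ref{C7.1} applies and yields $H(a)+H(q-1-a)\equiv 0\pmod p$. Second, by the identification $S(a)\equiv H(a)+H(q-1-a)\pmod p$ established from \eqref{7.11}, we conclude $S(a)\equiv 0\pmod p$ for all $a$ in the stated range, which is precisely \eqref{7.12}. Conversely — if one wants the full ``equivalent form'' assertion — the same identity shows that \eqref{7.12} implies \eqref{7.7} for those $\alpha,\beta$ with $\alpha\ge\beta$; and since \eqref{7.7} is symmetric in $\alpha,\beta$, this recovers Corollary~\ref{C7.1} in full.

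I do not expect any genuine obstacle here: every nontrivial manipulation — the binomial-coefficient reflections $\binom\alpha i\equiv(-1)^i\binom{\beta+i}\beta$, the $p$-adic continuity estimate $\binom za\equiv\binom{z'}a$ when $\nu_p(z-z')>\log_p a$, and the vanishing-binomial argument extending the $j$-range — has already been carried out in the text preceding the statement. The only care needed is bookkeeping: one must check that the substitutions $i\mapsto a-i$, $i'=2j+2\{\frac{a-1}2\}-i$, $l=a-1-2k$, $l=2j+2\{\frac{a-1}2\}$ used in deriving \eqref{7.8}--\eqref{7.10} are all legitimate bijections on the relevant index sets, and that the floor/ceiling and fractional-part identities (e.g. $\lfloor\frac a2\rfloor+\lceil\frac a2\rceil=a$, $2\{\frac{a-1}2\}\in\{0,1\}$) are applied consistently across the parity cases $a$ even and $a$ odd. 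Thus the corollary is essentially a repackaging of Corollary~\ref{C7.1}, and the write-up can be correspondingly brief.
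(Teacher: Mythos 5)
Your proposal is correct and follows exactly the route the paper intends: Corollary~\ref{C7.2} is obtained by combining the identity \eqref{7.11}, i.e.\ $S(a)\equiv H(a)+H(q-1-a)\pmod p$ for $\frac{q-1}2\le a\le q-1$ (where the extension of the $j$-range is valid precisely because $a\ge\frac{q-1}2$), with Corollary~\ref{C7.1} applied to $\alpha=a$, $\beta=q-1-a$. This is essentially the same argument as in the text, so nothing further is needed.
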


Equation~\eqref{7.12} is quite curious. In fact, 
computer experiments suggest a congruence pattern for the function $S$ that is even stronger than \eqref{7.12}. We conclude this section with the following conjecture.

\begin{conj}\label{CJ7.3} We have
\[
S(a)\equiv 
\begin{cases}
0\pmod 3&\text{if $a=0$ or $3^{2k+1}\le a\le 3^{2k+2}-1$ for some integer $k\ge 0$},\cr
-1\pmod 3&\text{otherwise}.
\end{cases}
\]
\end{conj}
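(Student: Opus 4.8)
The plan is to encode the conjectured congruence in a single power-series identity over $\mathbb{F}_3$ and to pin that series down by a functional equation. Since every binomial coefficient $\binom{z+1/2}{n}$ appearing in \eqref{7.11} has denominator a power of $2$ and $2$ is a $3$-adic unit, one first checks $S(a)\in\mathbb{Z}_{(3)}$, so that $\Phi({\tt x}):=\sum_{a\ge0}S(a){\tt x}^a$ lies in $\mathbb{Z}_{(3)}[[{\tt x}]]$ and $\bar\Phi:=\Phi\bmod 3\in\mathbb{F}_3[[{\tt x}]]$ is well defined. The goal is to prove the identity
\[
(1-{\tt x}^3)\,\bar\Phi^{\,3}+(1-{\tt x})\,\bar\Phi+{\tt x}=0 ,
\]
which, upon extracting the coefficient of ${\tt x}^n$ (and using $\bar\Phi^3=\bar\Phi({\tt x}^3)$ in characteristic $3$), is equivalent to: $S(0)\equiv0$, and $S(n)\equiv S(n-1)-[n=1]-[\,3\mid n\,]\bigl(S(n/3)-S(n/3-1)\bigr)\pmod 3$ for $n\ge1$. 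Granting this, the conjecture follows immediately: the map $\Psi\mapsto(1-{\tt x}^3)\Psi^3+(1-{\tt x})\Psi+{\tt x}$ has, at the origin, value $0$ and ${\tt Y}$-derivative $1$, so by Hensel's lemma it has a unique power-series zero; the series $\phi:=\sum_{j\ge0}(-1)^j{\tt x}^{3^j}$ satisfies $\phi^3={\tt x}-\phi$ and $\phi(0)=0$, so $-\phi/(1-{\tt x})$ is that zero; hence $\bar\Phi=-\phi/(1-{\tt x})$ and $S(a)\equiv-\sum_{j:\,3^j\le a}(-1)^j\pmod3$, which is $0$ for $a=0$, is $-1$ when $\lfloor\log_3a\rfloor$ is even (i.e.\ $3^{2k}\le a\le3^{2k+1}-1$), and is $0$ when $\lfloor\log_3a\rfloor$ is odd (i.e.\ $3^{2k+1}\le a\le3^{2k+2}-1$) --- exactly the claimed dichotomy.

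The substance of the argument is the evaluation of $\bar\Phi$, equivalently of $\Phi$ over $\mathbb{Q}$. First collapse the inner sum of \eqref{7.11} by Vandermonde's convolution,
\[
\sum_{0\le i\le N}\binom ai\binom{a+N-i}{N-i}=[{\tt y}^N]\,\frac{(1+{\tt y})^a}{(1-{\tt y})^{a+1}}\qquad(N\ge0),
\]
so that, writing $M=\lfloor a/2\rfloor$ and $\epsilon_a=2\{(a-1)/2\}\in\{0,1\}$,
\[
S(a)=\sum_{j\ge0}(-2)^j\,\Bigl([{\tt y}^{2j+\epsilon_a}]\tfrac{(1+{\tt y})^a}{(1-{\tt y})^{a+1}}\Bigr)\Bigl(\binom{M+j}{2j+\epsilon_a}+\binom{M+j+\frac12}{2j+\epsilon_a}\Bigr).
\]
The sum over $j$ is a Chebyshev-type sum: by Pascal's rule the family $F_m({\tt v})=\sum_{n\ge0}\binom{m+n}{2n}{\tt v}^n$ satisfies $F_m=(2+{\tt v})F_{m-1}-F_{m-2}$, hence $F_m=(\mu^{m+1}+\mu^{-m})/(1+\mu)$ where $\mu+\mu^{-1}=2+{\tt v}$; the identical recursion holds for the half-integer index $m=M+\tfrac12$, giving a parallel closed form differing by a factor $\mu^{1/2}$. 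Feeding these back and summing over $a$ (separately over even and odd $a$, to resolve $\lfloor a/2\rfloor$ and $\{(a-1)/2\}$) expresses $\Phi$ as an explicit algebraic function of ${\tt x}$ over $\mathbb{Q}$.

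What remains is to reduce modulo $3$ and verify the cubic functional equation --- and this is where I expect the main obstacle to lie. Here $-2\equiv1$, the Frobenius acts by $\mu\mapsto\mu^3$, and $\tfrac12\in\mathbb{Z}_3$ has base-$3$ digits $(2,1,1,1,\dots)$, so each $\binom{M+j+\frac12}{N}\bmod3$ is computed by Lucas' theorem from the digit expansions, and one must show the algebraic expression for $\Phi$ collapses modulo $3$ to a zero of $(1-{\tt x}^3){\tt Y}^3+(1-{\tt x}){\tt Y}+{\tt x}$. Two features make this delicate: the half-integer index $M+\tfrac12$ introduces a square root over $\mathbb{Q}$ and a fussy Lucas bookkeeping modulo $3$; and the ``base-$2$'' data $\lfloor a/2\rfloor$, $\{(a-1)/2\}$ in \eqref{7.11} split $\Phi$ into even and odd parts and hide the base-$3$ self-similarity that surfaces only after reduction. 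A variant that avoids producing the global closed form is to prove the recurrence $S(n)\equiv S(n-1)-[n=1]-[\,3\mid n\,](S(n/3)-S(n/3-1))\pmod3$ directly from \eqref{7.11}, by expanding $S(n)-S(n-1)$, reducing each binomial coefficient by Lucas' theorem according to the base-$3$ digits of $n$, and recognizing the ``$3\mid n$'' term as a Frobenius contraction of the double sum; this is the cleanest route but confronts the same core difficulty. Finally, it is worth noting that $\phi$ is the formal compositional inverse of the additive polynomial ${\tt y}^3+{\tt y}$, which mirrors the algebraic shape of the trinomial $f$ in Theorem~\ref{T1.1}; a conceptual explanation of this coincidence would likely shorten the proof and clarify the way Conjecture~\ref{CJ7.3} sharpens Corollary~\ref{C7.2}.
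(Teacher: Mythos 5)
There is a genuine gap, and it is worth being precise about where it sits. The statement you are proving is stated in the paper only as a conjecture supported by computer experiments; the paper proves nothing beyond Corollary~\ref{C7.2} (which gives $S(a)\equiv 0\pmod p$ only for $q\equiv 1\pmod 8$ and $\frac{q-1}2\le a\le q-1$, a much weaker and sparser statement), and it explicitly leaves even \eqref{7.7} as an open question. Your proposal does not close this. The entire argument hinges on the functional equation $(1-{\tt x}^3)\bar\Phi^3+(1-{\tt x})\bar\Phi+{\tt x}=0$ in $\Bbb F_3[[{\tt x}]]$, equivalently the recurrence $S(n)\equiv S(n-1)-[n=1]-[3\mid n](S(n/3)-S(n/3-1))\pmod 3$. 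The steps you do carry out correctly --- $S(a)\in\Bbb Z_{(3)}$ since the half-integer binomials only introduce powers of $2$ in denominators; Hensel uniqueness of the root of $(1-{\tt x}^3){\tt Y}^3+(1-{\tt x}){\tt Y}+{\tt x}$ vanishing at ${\tt x}=0$; the identification of that root as $-\phi/(1-{\tt x})$ with $\phi^3+\phi={\tt x}$; and the extraction of the coefficientwise consequence, which does reproduce exactly the conjectured dichotomy --- show only that the conjecture \emph{would follow} from the functional equation. But the functional equation is itself exactly equivalent to the conjecture (given your uniqueness argument, it holds if and only if $S(a)$ takes the conjectured values), so what you have produced is a reformulation, not a proof. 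You acknowledge this yourself: the reduction of \eqref{7.11} via Vandermonde, the Chebyshev-type closed forms $F_m=(\mu^{m+1}+\mu^{-m})/(1+\mu)$ with the half-integer companion, the reduction modulo $3$ via Lucas' theorem with $\tfrac12$ having $3$-adic digits $(2,1,1,\dots)$ --- none of these computations is actually performed, and the step where the algebraic closed form for $\Phi$ is supposed to collapse modulo $3$ to a root of the cubic is precisely the step you flag as ``where I expect the main obstacle to lie.''

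That said, the reformulation has genuine value: recasting Conjecture~\ref{CJ7.3} as the algebraicity statement $\bar\Phi=-\phi/(1-{\tt x})$ with $\phi$ the compositional inverse of ${\tt y}^3+{\tt y}$ makes the base-$3$ self-similarity structural rather than empirical, and it suggests two concrete finishing strategies (a closed algebraic form for $\Phi$ over $\Bbb Q$ followed by reduction, or a direct proof of the mod-$3$ recurrence from \eqref{7.11} via Lucas' theorem and a Frobenius contraction of the double sum). Your observation that $\phi$ inverts the additive polynomial ${\tt y}^3+{\tt y}$, echoing the shape of the trinomial $f$, is an attractive heuristic. But until the cubic functional equation (or the equivalent recurrence) is actually established from the definition \eqref{7.11} of $S(a)$, the conjecture remains open, and the proposal should be presented as a reduction and a program, not as a proof.
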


\bigskip

\appendix

\section{Recurrence Relations for $H(2n)$ and $H(2n+1)$}

We write \eqref{7.8} as
\[
\begin{split}
H(2n)\,&=\sum_{j,i\ge 0}\binom{2n}i\binom{n+j}{2j+1}\binom{2j+1+i}{2n}(-2)^j,\cr
H(2n+1)\,&=\sum_{j,i\ge 0}\binom{2n+1}i\binom{n+j}{2j}\binom{2j+i}{2n+1}(-2)^j,
\end{split}
\]
where $n\ge 0$. Using the Zeilberger algorithm for hypergeometric multi sums \cite{Apa-Zei,WZ92a}, we find that each of the sequences $H(2n)$ and $H(2n+1)$ satisfies a recurrence relation of order 4. For any sequence $F(n)$, define $NF(n)=F(n+1)$. Then $H(2n)$ and $H(2n+1)$ are annihilated by the operators in (A1) and (A2), respectively. 

\newpage
{\small
\begin{sideways}
\hbox to \vsize{(A.1)\hfill}
\end{sideways}
\hspace{2mm}
\begin{sideways}
\hbox to \vsize{\kern0mm
$(3+n) (4+n) (5+2 n) (7+2 n) (6123+15568 n+14248 n^2+5600 n^3+800 n^4) N^4$\hfill
}
\end{sideways}
\begin{sideways}
\hbox to \vsize{\kern0mm
$+ 8 (3+n) (5+2 n) (1662798+5262881 n+6660272 n^2+4334056 n^3+1534248 n^4+280800 n^5+20800 n^6) N^3$\hfill
}
\end{sideways}
\begin{sideways}
\hbox to \vsize{\kern0mm
$+ 2 (1777193487+7186386537 n+12405568049 n^2+11978835516 n^3+7092508484 n^4+2641104576 n^5+604784576 n^6+77932800 n^7+4329600 n^8) N^2$\hfill
}
\end{sideways}
\begin{sideways}
\hbox to \vsize{\kern0mm
$+ 8 (2+n) (3+2 n) (1610409+5182411 n+6622148 n^2+4328408 n^3+1534248 n^4+280800 n^5+20800 n^6) N$\hfill
}
\end{sideways}
\begin{sideways}
\hbox to \vsize{\kern0mm
$+ (1+n) (2+n) (1+2 n) (3+2 n) (42339+64064 n+35848 n^2+8800 n^3+800 n^4)$\hfill
}
\end{sideways}
\hspace{6mm}
\begin{sideways}
\hbox to \vsize{(A.2)\hfill}
\end{sideways}
\hspace{2mm}
\begin{sideways}
\hbox to \vsize{\kern0mm
$(3+n) (4+n) (7+2 n) (9+2 n) (18219+34416 n+23848 n^2+7200 n^3+800 n^4) N^4$\hfill
}
\end{sideways}
\begin{sideways}
\hbox to \vsize{\kern0mm
$+ 8 (3+n) (7+2 n) (6606054+16032469 n+15833228 n^2+8156552 n^3+2314248 n^4+343200 n^5+20800 n^6) N^3$\hfill
}
\end{sideways}
\begin{sideways}
\hbox to \vsize{\kern0mm
$+ 2 (10505025027+33069872253 n+44933987909 n^2+34456630164 n^3+16323110084 n^4+4894912704 n^5+907856576 n^6+95251200 n^7+4329600 n^8) N^2$\hfill
}
\end{sideways}
\begin{sideways}
\hbox to \vsize{\kern0mm
$+ 8 (2+n) (5+2 n) (6503193+15909639 n+15786632 n^2+8150904 n^3+2314248 n^4+343200 n^5+20800 n^6) N$\hfill
}
\end{sideways}
\begin{sideways}
\hbox to \vsize{\kern0mm
$+ (1+n) (2+n) (3+2 n) (5+2 n) (84483+106912 n+50248 n^2+10400 n^3+800 n^4)$\hfill
}
\end{sideways}
}

\newpage

\end{document}